\newtheorem{thm}{Theorem}
\newtheorem{lem}[thm]{Lemma}
\newtheorem{cor}[thm]{Corollary}
\newtheorem{defn}[thm]{Definition}
\theoremstyle{definition}
\newtheorem{eg}[thm]{Example}
\newtheorem{remark}[thm]{Remark}
\newtheorem{obs}[thm]{Observation}
\newcommand{\lrge}{\operatorname{large}}
\newcommand{\CSP}{\operatorname{CSP}}
\newcommand{\QMEM}{\operatorname{QMEM}}
\newcommand{\up}{\textup}
\newcommand{\set}{\operatorname{set}}
\begin{document}
\title[Universal Horn classes of Hypergraphs]{Axiomatisability and hardness for universal Horn classes of hypergraphs}

\begin{abstract}
We characterise finite axiomatisability and intractability of deciding membership for universal Horn classes generated by finite loop-free hypergraphs.
\end{abstract}
\author[L. Ham]{Lucy Ham}
\address{Department of Mathematics and Statistics, La Trobe University VIC 3086, Australia}
\email{leham@students.latrobe.edu.au}
\author[M. Jackson]{Marcel Jackson}
\address{Department of Mathematics and Statistics, La Trobe University VIC 3086, Australia}
\email{m.g.jackson@latrobe.edu.au}
\subjclass[2010]{Primary: 08C15, 05C65; Secondary: 05C60, 68Q17} %
\thanks{The second author was supported by ARC Discovery Project DP1094578 and Future Fellowship FT120100666}
\maketitle


A \emph{universal Horn class} is a class of model-theoretic structures of the same signature, closed under taking ultraproducts ($\textsf{P}_{\rm u}$), direct products over nonempty families ($\textsf{P}$) and isomorphic copies of substructures ($\textsf{S}$); see \cite{bursan,gor,mal,mcn} for example.  Equivalently they are classes axiomatisable by way of \emph{universal Horn sentences}: universally quantified disjunctions $\alpha_1\vee\dots\vee \alpha_k$, where each $\alpha_i$ is either an atomic formula of the language, or a negated atomic formula, and all but at most one of the $\alpha_i$ are negated.   \emph{Quasivarieties} are very closely related classes, differing from the universal Horn class definition only in that the trivial one-element structure (in which all relations are total) is automatically included; this corresponds to allowing the degenerate direct product over an empty family of structures.  

Problems of axiomatisability for universal Horn classes and quasivarieties have a relatively long history.  The starting point is perhaps Maltsev's characterisation of semigroups embeddable in groups \cite{mal:sgp,mal:sgp2}, with subsequent developments in semigroup theory including Sapir~\cite{sap}, Margolis and Sapir~\cite{marsap}, Jackson and Volkov~\cite{jacvol}.  There is also a wealth of literature within universal algebra and relational structures; see Gorbunov's book \cite{gor}, or the Studia Logica special issue \cite{ABISV} for many examples.  An extra impetus for investigation of universal Horn classes comes from  computational complexity.  For example, the fixed template constraint satisfaction problem over a finite relational structure is the problem of deciding membership of relational structures in a certain universal Horn class~\cite{jactro}.  Computational issues for universal Horn classes of relational structures also play a hidden role behind a number of examples demonstrating intractability of deciding membership of finite algebras in a finitely generated pseudovariety.  Indeed, several of the relatively few known examples involve encoding a \texttt{NP}-complete universal Horn class membership problem into a pseudovariety membership problems.  This is true for Szekely \cite{sze}, Jackson and McKenzie~\cite{jacmck} and \cite{jac:SAT} for example.  

The present article concerns both axiomatisability and computational complexity for universal Horn classes of loop-free hypergraphs, and we are able to extend all of the known results for simple graphs.  
The characterisation of finitely axiomatisable universal Horn classes of finite simple graphs was given by Caicedo~\cite{cai}, by combining a probabilistic result of Erd\H{o}s \cite{erd} with work of Ne\v{s}et\v{r}il and Pultr~\cite{nespul}.  In fact, Caicedo's work covers any universal Horn class whose members have bounded chromatic number.  After fixing a reasonable model-theoretic meaning to ``hypergraph'' we show that Caicedo's classification may be extended to arbitrary loop-free hypergraphs.  The precise statement depends on technicalities concerning how hyperedges are to be recorded as relations, but even without this it is possible to state an abridged version of the result as follows.

\begin{thm}\label{thm:main}
Let $\mathscr{H}$
 be a universal Horn class of hypergraphs without singleton hyperedges and with bounded chromatic number and hyperedge cardinality.  If $\mathscr{H}$ consists of disjoint unions of bipartite graphs---including the degenerate cases where there are no hyperedges---then $\mathscr{H}$ has a finite axiomatisation in first order logic.  In all other cases,  $\mathscr{H}$ has no finite axiomatisation in first order logic.  
\end{thm}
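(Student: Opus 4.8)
The plan is to establish the two directions of the dichotomy separately, using throughout that a universal Horn class is closed under $\textsf{S}$, $\textsf{P}$ and $\textsf{P}_{\rm u}$, and that a single first order sentence is preserved under ultraproducts ({\L}o\'s's theorem). Thus, for the negative direction it suffices to exhibit, for each candidate finite axiom, structures witnessing its inadequacy; concretely, if $\mathscr{H}=\Mod(\phi)$ for a single sentence $\phi$, I would produce members $\mathbf{B}_n\notin\mathscr{H}$ with $\prod_{\mathcal U}\mathbf{B}_n\in\mathscr{H}$, so that $\mathbf{B}_n\models\neg\phi$ forces $\prod_{\mathcal U}\mathbf{B}_n\models\neg\phi$ and a contradiction. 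I would first use the bounds on chromatic number and hyperedge cardinality to pin down the finite hypergraphs generating $\mathscr{H}$ as a universal Horn class.

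For the positive direction, I would show that when every member of $\mathscr{H}$ is a disjoint union of bipartite graphs the hyperedges all have cardinality two and the generators are bipartite graphs. Such classes are far more restricted than the (non finitely axiomatisable) class of \emph{all} bipartite graphs: membership is governed by embeddability into finite powers of the bipartite generators, which forces strong local separation and degree conditions. For instance, $\mathsf{ISP}(K_2)$ is exactly the class of graphs of maximum degree at most one, since in a power of $K_2$ any two distinct vertices must be separated by a \emph{proper} $2$-colouring. I would show these conditions are captured by finitely many universal Horn sentences together with symmetry, irreflexivity (loop-freeness) and correct arity, dispose of the degenerate edgeless case directly, and prove completeness by producing, for any finite model of the axioms, enough separating homomorphisms into the generators to embed it into a power; infinite models then follow by compactness.

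For the negative direction, suppose $\mathscr{H}$ is not of the special form. Then closure under $\textsf{S}$ yields a ``bad'' finite configuration in $\mathscr{H}$: either an odd cycle $C_{2k+1}$ in the graph case, or a hyperedge of cardinality at least three. I would invoke the Erd\H{o}s existence theorem (and its Erd\H{o}s--Hajnal/Lov\'asz analogue for hypergraphs) to build finite hypergraphs $\mathbf{B}_n$ of girth tending to infinity and chromatic number exceeding the bound $k_0$ of $\mathscr{H}$; the chromatic bound gives $\mathbf{B}_n\notin\mathscr{H}$ at once. Passing to an ultraproduct $\mathbf{B}=\prod_{\mathcal U}\mathbf{B}_n$ over a nonprincipal ultrafilter, the divergence of the girth makes $\mathbf{B}$ locally acyclic—no cycle of any fixed finite length survives, since its existence is a first order property holding in only finitely many factors—so every finite substructure of $\mathbf{B}$ is a forest. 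A Ne\v{s}et\v{r}il--Pultr-style analysis then certifies that such locally tree-like structures embed into powers of the bad generator, whence $\mathbf{B}\in\mathscr{H}$, completing the contradiction via {\L}o\'s.

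The main obstacle is exactly the step establishing $\mathbf{B}\in\mathscr{H}$ in the hypergraph setting. This needs a homomorphism/embedding characterisation of membership generalising Ne\v{s}et\v{r}il and Pultr from graphs to hypergraphs, together with a proof that locally acyclic hypergraphs satisfy it for \emph{every} admissible bad generator. The delicate point is the signature technicality flagged in the introduction: the argument must be shown robust across the different ways a hyperedge of a given cardinality may be recorded as a relation, and one must verify that high hypergraph-girth genuinely translates, in the ultraproduct, into the separating-homomorphism conditions required for membership. Ensuring the Erd\H{o}s-type hypergraphs respect the fixed hyperedge cardinalities, and tracking the remaining degenerate cases, are the final technical matters.
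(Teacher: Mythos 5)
Your negative-direction skeleton is essentially the paper's: both use Erd\H{o}s--Hajnal hypergraphs (Theorem~\ref{thm:erdhaj}, derived in the paper from Feder--Vardi, Theorem~\ref{thm:fedvar}) of girth exceeding $n$ and chromatic number exceeding the bound on $\mathscr{H}$, the chromatic bound placing them outside $\mathscr{H}$, while large girth makes small substructures hyperforests; your ultraproduct/{\L}o\'s formulation is equivalent to the paper's criterion that every $n$-generated substructure of the witness lies in $\mathscr{H}$, since universal Horn classes are locally closed. However, the step you yourself flag as ``the main obstacle''---that locally tree-like structures belong to $\mathscr{H}$---is precisely the paper's new technical content, and you leave it as a placeholder (``a Ne\v{s}et\v{r}il--Pultr-style analysis then certifies\dots''). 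The paper proves it in two concrete lemmas: Lemma~\ref{lem:hyperforest} shows every $k$-uniform hyperforest lies in $\mathsf{SP}(\mathbb{E}_k)$, where $\mathbb{E}_k$ is the single-hyperedge $k$-uniform hypergraph, by induction on the number of hyperedges, deleting a leaf and verifying the separation conditions (SEP1)--(SEP3) through explicit extensions of homomorphisms; and Lemma~\ref{lem:NAE} bridges the mismatch between hyperedge cardinality $\ell$ and relation arity $k$ by showing $\mathbb{E}_k\in\mathsf{SP}(\mathbb{E})$ whenever $k>\ell>1$. Without an argument at this level of detail your proof does not close, because nothing in the graph-case literature you invoke applies verbatim to $k$-hypergraph structures.

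Your identification of the ``bad configurations'' is also off in two ways, both traceable to the signature technicality you mention but do not resolve. First, for relation arity $k>2$ a hyperedge of cardinality \emph{two} is already bad: Lemma~\ref{lem:NAE} with $\ell=2$ puts $\mathbb{E}_k$ into $\mathsf{SP}(\mathbb{E})$, so for $k\geq 3$ any class containing \emph{any} hyperedge is non-finitely axiomatisable (cf.\ Example~\ref{eg:K2}: $\mathbb{K}_2$ as a $3$-hypergraph structure). Your trichotomy ``odd cycle or hyperedge of cardinality at least three'' misses this case entirely. Second, in the genuine graph case $k=c=2$ the boundary of finite axiomatisability is not the absence of odd cycles but, by Caicedo's theorem (which the paper simply cites for this case), consisting of disjoint unions of \emph{complete} bipartite graphs: the universal Horn class generated by the four-vertex path contains no odd cycles yet is not finitely axiomatisable, so your odd-cycle criterion would misclassify it. Relatedly, your positive direction is only sketched: the observation that $\mathsf{SP}(\mathbb{K}_2)$ gives graphs of maximum degree one is correct but is a single example, and the general finitely-axiomatisable case is not reproved in the paper either---it handles the edgeless degenerate cases via the classes $\mathscr{Q}_{1(k)}$ and $\mathscr{Q}_{2(k)}$ and otherwise defers to Caicedo, which is what your write-up should do as well unless you intend to supply the full separating-homomorphism completeness argument you allude to.
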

In particular, $\mathscr{H}$ can never have a finite axiomatisation if it contains a hypergraph with at least one hyperedge of arity more than $2$.    
As in the case of Caicedo's classification, the argument for  nonfinite axiomatisability will again follow by probabilistic constructions (this time Erd\H{o}s and Hajnal \cite{erdhaj}), while we are able to show that the finitely axiomatisable case becomes almost completely degenerate.  These results are found in Section \ref{sec:hypergraph}.  In Section \ref{lem:continuum} we extend another result in \cite{cai} by showing that there are continuum many universal Horn classes of hypergraphs.  We apply a method of Bonato~\cite{bon} to show that every interval in the homomorphism order on hypergraphs represents a continuum of universal Horn classes; this requires a new extension to a result of Ne\v{s}et\v{r}il \cite{nes} on the density of the homomorphism order on hypergraphs.  
In Section \ref{sec:finite} we turn to the question of axiomatisability amongst finite structures, a topic that has generated quite a lot of interest in finite model theory; see~\cite{ros} for example.  
We are able to show that Theorem~\ref{thm:main} continues to hold when restricted to finite structures only: this appears to be new even in the case of simple graphs, and shows that the model-theoretic \textsf{SP}-Preservation Theorem holds for classes of hypergraphs of bounded chromatic number and hyperedge cardinality.
This is established using an Ehrenfeucht-Fra\"{\i}ss\'e game argument to observe a general lemma applying to any hereditary class of finite structures that is closed under certain disjoint unions.  
Finally, in Section~\ref{sec:hard} we observe an alternative path to the results of Sections~\ref{sec:hypergraph} and~\ref{sec:finite}  by application of the authors' recent 
All or Nothing Theorem~\cite{hamjac}; see Theorem~\ref{thm:hard} below.  
This approach has the advantage of adding complexity-theoretic hardness results for associated computational problems and avoiding the probabilistic constructions completely.  
However, the method depends on the All or Nothing Theorem, whose proof is substantially more involved than the direct constructions here.

\section{Hypergraphs}\label{sec:hypergraph}
A \emph{hypergraph} is a pair $( V,E)$, where $V$ is a set---the \emph{vertices}---and $E$ is a set of non-empty subsets of $V$---the \emph{hyperedges}.  For $k\geq 1$, a \emph{$k$-uniform hypergraph} is a hypergraph $(V,E)$ where all hyperedges have exactly $k$ elements.

\emph{Graphs} coincide with hypergraphs in which all hyperedges have size at most $2$, while \emph{simple graphs} are the $2$-uniform hypergraphs.  Many graph-theoretic concepts extend to hypergraphs in reasonably obvious ways.  
\begin{itemize}
\item An \emph{$n$-cycle} is a sequence $v_0,e_0,v_1,e_1,\dots,v_{n-1},e_{n-1}$ alternating between distinct vertices $v_0,\dots,v_{n-1}$ and distinct hyperedges $e_0,\dots,e_{n-1}$, such that $v_i\in e_i\cap e_{i+1}$ (with addition in the subscript taken modulo $n$).
\item An \emph{$\ell$-colouring} of a hypergraph $\langle V;E\rangle$ is a function $\gamma:V\to\{0,1,\dots,\ell-1\}$ such that $|\gamma(e)|\geq 2$ for each $e\in E$.
\item the \emph{chromatic number} $\chi$ of a hypergraph $\langle V;E\rangle$ is the smallest $\ell$ for which $\langle V;E\rangle$ is $\ell$-colourable.  
In other words, the chromatic number is the smallest number of colours required to colour the vertices in such a way that no hyperedge is monochromatic.
\item Two vertices $z$ and $w$ of $V$ are \emph{adjacent} if they belong to a common hyperedge, and are \emph{connected} if there is a sequence $z= v_0, v_1, v_2, \dots , v_k = w$ of vertices of $V$ in which $v_{i-1}$ is adjacent to $v_i$, for $i=1, 2,\dots , k$.  A \emph{connected hypergraph} is a hypergraph where every pair of vertices is connected.
\item A \emph{hyperforest} is a hypergraph without cycles, and \emph{hypertree} is a connected hyperforest.
\end{itemize}

Let $k$ be at least as large as the maximal hyperedge cardinality of a hypergraph $\mathbb{H}=(V,E)$.  Then $\mathbb{H}$ may be considered as a relational structure $\langle V;r_E\rangle$ with a single $k$-ary relation~$r_E$ by treating each hyperedge $\{v_1,\dots,v_\ell\}$ (where $\ell\leq k$) as the family of $k$-tuples $\{(v_{i_1},\dots,v_{i_k})\mid \{i_1,\dots,i_k\}=\{1,\dots,\ell\}\}$.  We call such a structure a \emph{$k$-hypergraph structure}.  
\begin{eg}\label{eg:K2}
The simple graph $\mathbb{K}_2$ with edge $\{0,1\}$.  Treated as a $2$-hypergraph structure, $\mathbb{K}_2$ is 
\[
\langle \{0,1\};\{(0,1),(1,0)\}\rangle.
\]  
As a $3$-hypergraph structure $\mathbb{K}_2$ is 
\[
\langle \{0,1\};\{(0,0,1),(0,1,0),(1,0,0),(1,1,0),(1,0,1),(0,1,1)\}\rangle.
\]
\end{eg}
The class of all $k$-hypergraph structures is a universal Horn class, definable by the following \emph{set-equivalence} universal Horn sentences.
\begin{description}
\item[Set equivalence] $(x_{i_1},\dots,x_{i_k})\in r\rightarrow (x_{j_1},\dots,x_{j_k})\in r$, if $\{x_{i_1},\dots,x_{i_k}\}=\{x_{j_1},\dots,x_{j_k}\}$.
\end{description}
The class of $k$-uniform hypergraphs (as model-theoretic structures) is a subclass of the $k$-hypergraph structures, defined by adjoining the following \emph{uniformity} laws.
\begin{description}
\item[Uniformity] $(x_1, \dots, x_{k})\in r \rightarrow  x_i\ne x_j$ whenever $i\neq j$ and $i,j\in \{1,\dots,k\}$.
\end{description}
The class of loop-free hypergraphs (that is, with no singleton hyperedges) is a subclass obtained by adjoining the single universal Horn sentence 
$(x,\dots,x)\notin r$; clearly $k$-uniform hypergraphs are loop-free, except in the degenerate case of $k=1$.

\begin{remark}
By default we choose the arity $k$ to equal the maximal cardinality of any hyperedge in $\mathbb{H}$.  Our methods cover the case where $k$ is strictly larger than this, however the statement of results will be different.  Example \ref{eg:K2} illustrates the difference, as the homomorphism problem in the case of $k=2$ is the tractable problem of graph $2$-colouring  but is \texttt{NP}-complete problem \textsf{+NAE3SAT} when $k=3$.
\end{remark}

The notion of \emph{induced subhypergraph} in the next definition coincides with the model-theoretic notion of substructure.
\begin{defn}
A hypergraph $\mathbb{G}'=\langle V'; E'\rangle$ is an \emph{induced subhypergraph} of $\mathbb{G}=\langle V;E\rangle$ if $V'\subseteq V$ and $E'=\{e \cap {V'}\ |\ e\in E\}$.  
\end{defn}

The homomorphism notion also agrees with the model-theoretic homomorphism when both $\mathbb{G}$ and $\mathbb{G}'$ are considered as a $k$-hypergraph structures.
\begin{defn}
For any pair of hypergraphs $\mathbb{G}=(V,E)$ and $\mathbb{G}'=(V', E')$, a map $f:V\to V'$ is a \emph{homomorphism} if for each $e\in E$, the set $f(e)=\{f(v)\mid v\in e\}$ is an element of $E'$.
\end{defn}
As usual, $\mathbb{G}\rightarrow \mathbb{G}'$ will denote the statement ``there exists a homomorphism from $\mathbb{G}$ to $\mathbb{G}'$'' and $\mathbb{G}\not\rightarrow \mathbb{G}'$ will denote its negation.  
\begin{eg}\label{eg:complete}
Let $\mathbb{K}_{n}^{(k)}$ denote the loop-free hypergraph on $n$ points $\{0,1\dots,n-1\}$ and whose hyperedge set is the set of all subsets of $\{0,1,\dots,n-1\}$ of size between $2$ and $k$. 
Then a $k$-hypergraph structure $\mathbb{H}$ is $n$-colourable if and only if $\mathbb{H}\to \mathbb{K}_{n}^{(k)}$ \up(as a $k$-hypergraph structure\up).
\end{eg}
Note that the hypergraph $\mathbb{K}_{n}^{(2)}$ is the usual complete graph $\mathbb{K}_n$.

For a $k$-ary relation $r$, let us denote the \emph{set closure of $r$}, denoted $\set(r)$, to be the closure of $r$ under applications of the set equivalence laws: in other words, if $(s_1,\dots,s_k)\in r$ is a tuple, then we add the add tuple $(s_1',\dots,s_k')$ to $\set(r)$ whenever $\{s_1',\dots,s_k'\}=\{s_1,\dots,s_k\}$.  It is trivial that if $\mathbb{S}=\langle S;r^\mathbb{S}\rangle$ is a relational structure in the signature of a single $k$-ary relation $r$, then provided $r^\mathbb{S}$ has no constant tuples $(s,\dots,s)\in r^\mathbb{S}$, the structure $\mathbb{S}=\langle S;\set(r^\mathbb{S})\rangle$ is a $k$-hypergraph structure, and is a $k$-uniform hypergraph structure if $r^\mathbb{S}$ already satisfied the uniformity laws.  We write $\set(\mathbb{S})$ to denote the result of replacing $r^\mathbb{S}$ by $\set(r^\mathbb{S})$.  The following lemma is also trivial.
\begin{lem}\label{lem:setclosure}
Let $\mathbb{H}$ be a $k$-hypergraph structure and $\mathbb{S}=\langle S;r^\mathbb{S}\rangle$ be a relational structure with a single $k$-ary relation $r$.  Then $\mathbb{S}\rightarrow \mathbb{H}$ if and only if  $\set(\mathbb{S})\rightarrow\mathbb{H}$.
\end{lem}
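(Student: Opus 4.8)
The plan is to verify both implications directly, using the identity map for one direction and letting a single homomorphism do double duty for the other.

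First I would dispose of the implication $\set(\mathbb{S})\rightarrow\mathbb{H}\Rightarrow \mathbb{S}\rightarrow\mathbb{H}$. Since the set closure only ever adds tuples, we have $r^\mathbb{S}\subseteq\set(r^\mathbb{S})$, so the identity map on $S$ is a homomorphism $\mathbb{S}\to\set(\mathbb{S})$. Composing this with any homomorphism $\set(\mathbb{S})\to\mathbb{H}$ yields a homomorphism $\mathbb{S}\to\mathbb{H}$.

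For the converse I would argue that the very same map witnesses both homomorphisms. Let $f\colon S\to V$ be a homomorphism $\mathbb{S}\to\mathbb{H}$, and take any tuple $(s_1',\dots,s_k')\in\set(r^\mathbb{S})$. By definition of the set closure there is a tuple $(s_1,\dots,s_k)\in r^\mathbb{S}$ with $\{s_1',\dots,s_k'\}=\{s_1,\dots,s_k\}$. Since $f$ is a homomorphism, $(f(s_1),\dots,f(s_k))\in r^\mathbb{H}$, and because taking images commutes with passing to underlying sets we have $\{f(s_1'),\dots,f(s_k')\}=\{f(s_1),\dots,f(s_k)\}$. The key point is that $\mathbb{H}$, being a $k$-hypergraph structure, satisfies the set-equivalence laws, so membership in $r^\mathbb{H}$ depends only on the underlying set of a tuple; hence $(f(s_1'),\dots,f(s_k'))\in r^\mathbb{H}$ as well. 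This shows that $f$ is also a homomorphism $\set(\mathbb{S})\to\mathbb{H}$.

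The argument is entirely routine; the only point requiring care is recording that the set-equivalence closure property of $\mathbb{H}$ is exactly what allows the image of an enlarged tuple to be matched against the image of an original tuple, so there is no genuine obstacle to overcome.
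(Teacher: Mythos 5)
Your proof is correct and is exactly the routine verification the paper has in mind when it declares this lemma trivial and omits a proof: the identity embedding handles one direction, and the set-equivalence laws satisfied by $\mathbb{H}$ (membership in $r^{\mathbb{H}}$ depends only on the underlying set of a tuple) let the same map $f$ serve for the other. No gaps, and no divergence from the intended argument.
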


The next theorem is essentially Theorem 5 of Feder and Vardi \cite{fedvar}, except that their result is proved relative to the class of all relational structures rather than $k$-hypergraph structures.  In the more general setting of \cite{fedvar}, what we have written as $\mathbb{H}_2^\sharp$ would not be a $k$-uniform hypergraph structure (but rather just some general relational structure of the same signature as $\mathbb{H}_2$) and the notion of cycle is more restrictive than the one we use.  The statement we give in Theorem \ref{thm:fedvar} follows immediately from \cite[Theorem~5]{fedvar} after an application of the $\set(\ )$ operator and Lemma \ref{lem:setclosure}.  
\begin{thm}\label{thm:fedvar}
Fix any positive integer $\ell$.  Let $\mathbb{H}_1$ and $\mathbb{H}_2$ be $k$-hypergraph structures such that there is no homomorphism from $\mathbb{H}_2$ to $\mathbb{H}_1$.  Then there is a $k$-uniform hypergraph structure $\mathbb{H}_2^\sharp$ such that 
\begin{enumerate}
\item $\mathbb{H}_2^\sharp\rightarrow \mathbb{H}_2$\up;
\item $\mathbb{H}_2^\sharp\not\rightarrow \mathbb{H}_1$\up;
\item  any cycle in $\mathbb{H}_2^\sharp$ has size greater than $\ell$.
\end{enumerate}
\end{thm}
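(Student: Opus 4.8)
The plan is to obtain Theorem~\ref{thm:fedvar} as a direct consequence of \cite[Theorem~5]{fedvar} by transporting its conclusion through the $\set(\ )$ operator of this section. Since $\mathbb{H}_1$ and $\mathbb{H}_2$ are in particular relational structures with a single $k$-ary relation and $\mathbb{H}_2\not\to\mathbb{H}_1$, I would first apply the Feder--Vardi sparse incomparability result to produce a relational structure $\mathbb{B}$ in the same signature with $\mathbb{B}\to\mathbb{H}_2$, $\mathbb{B}\not\to\mathbb{H}_1$, and no short cycles in the (more restrictive) sense of \cite{fedvar}; the girth bound requested from \cite[Theorem~5]{fedvar} will be chosen to be some sufficiently large function of $\ell$ and $k$, for reasons explained below. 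I would then define $\mathbb{H}_2^\sharp:=\set(\mathbb{B})$ and verify the three stated properties.

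Conditions (1) and (2) should be immediate applications of Lemma~\ref{lem:setclosure}. Both $\mathbb{H}_1$ and $\mathbb{H}_2$ are $k$-hypergraph structures, so the lemma gives the equivalences $\mathbb{B}\to\mathbb{H}_2\iff\set(\mathbb{B})\to\mathbb{H}_2$ and $\mathbb{B}\to\mathbb{H}_1\iff\set(\mathbb{B})\to\mathbb{H}_1$; the first yields (1) and the contrapositive of the second yields (2). Before invoking the lemma, I must also confirm that $\mathbb{H}_2^\sharp$ is genuinely a $k$-\emph{uniform} hypergraph structure, which by the discussion preceding Lemma~\ref{lem:setclosure} amounts to checking that $\mathbb{B}$ satisfies the uniformity laws, i.e.\ that no tuple of $\mathbb{B}$ has a repeated entry. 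A repeated entry is the most degenerate kind of short cycle, so I expect this to follow either from the girth hypothesis or directly from the form of the Feder--Vardi construction; in particular it rules out constant tuples, so $\set(\mathbb{B})$ is a well-defined $k$-uniform hypergraph structure.

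The one point requiring genuine care is condition (3): converting the girth guarantee for $\mathbb{B}$ in the sense of \cite{fedvar} into a girth guarantee for $\mathbb{H}_2^\sharp$ in the hypergraph sense defined at the start of this section. The crucial simplification is that the operator $\set(\ )$ does not change which vertex sets occur as hyperedges---it merely closes each tuple under coordinate permutations---so the hypergraph underlying $\mathbb{H}_2^\sharp$ is identical to the one underlying $\mathbb{B}$, and the hypergraph cycles of the two coincide. The remaining task is thus purely a comparison of two cycle notions on the single structure $\mathbb{B}$: because the Feder--Vardi notion is more restrictive, a short hypergraph cycle need not literally be a Feder--Vardi cycle, and I anticipate the main obstacle to lie in showing that every hypergraph $n$-cycle nonetheless gives rise to a Feder--Vardi cycle of length bounded by a function of $n$ and $k$. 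Establishing this bounded correspondence, and then choosing the girth parameter supplied to \cite[Theorem~5]{fedvar} above that bound for $n\le\ell$, will rule out all hypergraph cycles of size at most $\ell$ and so deliver (3), completing the derivation.
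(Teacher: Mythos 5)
Your proposal follows the paper's own route exactly: the paper derives Theorem~\ref{thm:fedvar} ``immediately'' from \cite[Theorem~5]{fedvar} by applying the $\set(\ )$ operator and Lemma~\ref{lem:setclosure}, precisely as you do, with (1) and (2) coming from the lemma and uniformity of $\set(\mathbb{B})$ from the exclusion of degenerate short cycles. The only divergence is your cautious girth parameter depending on $\ell$ and $k$: since every hypergraph $n$-cycle of $\set(\mathbb{B})$ lifts to a Feder--Vardi cycle of length $n$ in $\mathbb{B}$ (pick one representative tuple per hyperedge; distinct hyperedges yield distinct tuples once the Feder--Vardi girth exceeds $2$), the parameter $\ell$ itself already suffices, so your safety margin is harmless but unnecessary.
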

The following theorem is due to Erd\H{o}s and Hajnal \cite{erdhaj}, but it follows immediately from Theorem \ref{thm:fedvar} and Example \ref{eg:complete}, by choosing $\mathbb{H}_1:=\mathbb{K}_n^{(k)}$ and $\mathbb{H}_2:=\mathbb{K}_{n+1}^{(k)}$.
\begin{thm}\label{thm:erdhaj}
For any $k\geq 2$ and $\ell,n>1$ there is a finite $k$-uniform hypergraph $\mathbb{H}$ such that $\mathbb{H}$ has no cycles of length less than $\ell$ and is not $n$-colourable. 
\end{thm}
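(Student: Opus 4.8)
The plan is to derive this directly from Theorem~\ref{thm:fedvar} and Example~\ref{eg:complete}, as flagged in the preamble to the statement, by taking $\mathbb{H}_1:=\mathbb{K}_n^{(k)}$ and $\mathbb{H}_2:=\mathbb{K}_{n+1}^{(k)}$. All of the genuinely difficult work—producing a high-girth structure that preserves a prescribed non-homomorphism—is already absorbed into Theorem~\ref{thm:fedvar}, so the task reduces to verifying its hypothesis and then translating its three conclusions into the language of colourings.

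First I would check the hypothesis of Theorem~\ref{thm:fedvar}, namely that there is no homomorphism $\mathbb{K}_{n+1}^{(k)}\to\mathbb{K}_n^{(k)}$. By Example~\ref{eg:complete} this is equivalent to the assertion that $\mathbb{K}_{n+1}^{(k)}$ is not $n$-colourable, which is a one-line pigeonhole argument: the vertex set of $\mathbb{K}_{n+1}^{(k)}$ has $n+1$ elements, and since $k\geq 2$ every $2$-element subset of vertices is a hyperedge. Any map $\gamma$ from $n+1$ vertices into $n$ colours must assign the same colour to two distinct vertices $u,v$; but then $\{u,v\}$ is a hyperedge with $|\gamma(\{u,v\})|=1$, so $\gamma$ is not a colouring. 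Hence $\mathbb{K}_{n+1}^{(k)}$ is not $n$-colourable, and the required non-homomorphism $\mathbb{K}_{n+1}^{(k)}\not\to\mathbb{K}_n^{(k)}$ holds.

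Next I would invoke Theorem~\ref{thm:fedvar} with this choice of $\mathbb{H}_1,\mathbb{H}_2$ and the given $\ell$, which supplies a finite $k$-uniform hypergraph structure $\mathbb{H}:=\mathbb{H}_2^\sharp$. Conclusion (3) states that every cycle in $\mathbb{H}$ has size greater than $\ell$, which yields (indeed slightly exceeds) the required absence of cycles of length less than $\ell$, where ``cycle'' is understood in the sense fixed in Section~\ref{sec:hypergraph}. Conclusion (2) gives $\mathbb{H}\not\to\mathbb{K}_n^{(k)}=\mathbb{H}_1$, and applying Example~\ref{eg:complete} a second time this says exactly that $\mathbb{H}$ is not $n$-colourable. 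Reading the relational $k$-uniform hypergraph structure $\mathbb{H}$ back as a genuine $k$-uniform hypergraph then produces the desired example; conclusion (1) of Theorem~\ref{thm:fedvar} is not needed.

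At this level there is essentially no obstacle to overcome: the two points requiring any care are the elementary verification that $\mathbb{K}_{n+1}^{(k)}\not\to\mathbb{K}_n^{(k)}$ and the observation that the structure furnished by Theorem~\ref{thm:fedvar} may be taken finite because $\mathbb{H}_1$ and $\mathbb{H}_2$ are finite. The true content—reconciling arbitrarily large girth with non-$n$-colourability—lives entirely inside Theorem~\ref{thm:fedvar}, i.e.\ in the Feder--Vardi sparse-incomparability construction, which we are taking as given.
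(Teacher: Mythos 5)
Your proposal is correct and follows exactly the paper's intended derivation: the paper proves Theorem~\ref{thm:erdhaj} precisely by applying Theorem~\ref{thm:fedvar} with $\mathbb{H}_1:=\mathbb{K}_n^{(k)}$ and $\mathbb{H}_2:=\mathbb{K}_{n+1}^{(k)}$ and translating conclusions (2) and (3) via Example~\ref{eg:complete}, just as you do. Your explicit pigeonhole verification of $\mathbb{K}_{n+1}^{(k)}\not\rightarrow\mathbb{K}_n^{(k)}$ and your remark on the finiteness of $\mathbb{H}_2^\sharp$ are details the paper leaves tacit, and both are handled correctly.
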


A hyperedge for which at most one vertex is contained in more than one hyperedge is called a \emph{leaf}.  A routine variation of the standard argument for $2$-uniform hyperforests (that is, forests) shows that every finite hyperforest contains at least one leaf.

In the following we use the well known fact that a structure $\mathbb{S}$ lies in the universal Horn class of some finite structure $\mathbb{M}$ if and only if the following \emph{separation conditions} hold:
\begin{enumerate}
 \item[(SEP1)] there exists a homomorphism $\phi$ from $\mathbb{S}$ to $\mathbb{M}$;
 \item[(SEP2)] for all $x, y \in S$ with $x\not=y$, there exists a homomorphism $\psi$ from $\mathbb{S}$ to $\mathbb{M}$ satisfying $\psi(x)\ne \psi(y)$;
 \item[(SEP3)] for every relation $r$ in the signature, with arity $n$, if $(s_1,\dots,s_n)\in S^n\backslash r^\mathbb{S}$, then there exists a homomorphism $\gamma$ satisfying $(\gamma(s_1), \dots , \gamma(s_k))\notin M^n\backslash r^\mathbb{M}$.
\end{enumerate}
We  mention that if $\mathbb{S}$ is also finite then these conditions imply that $\mathbb{S}$ is isomorphic to an induced substructure of a finite direct power of $\mathbb{M}$, indeed it is easy to prove that $\mathbb{S}$ is isomorphic to a substructure of $\mathbb{M}^{\hom(\mathbb{S},\mathbb{M})}$, where $\hom(\mathbb{S},\mathbb{M})$ denotes the set of all homomorphisms from $\mathbb{S}$ to $\mathbb{M}$, which is finite if $\mathbb{S}$ and $\mathbb{M}$ are finite.

\begin{lem}\label{lem:hyperforest}
Let  $k\geq 3$ and $\mathbb{E}_k=\langle\{v_1, \dots, v_k\};\{\{v_1,\dots,v_k\}\}\rangle$ be the  hypergraph containing exactly one hyperedge.  If $\mathbb{E}_k$ is considered as a $k$-uniform hypergraph, then $\mathsf{SP}(\mathbb{E}_k)$ contains all $k$-uniform hyperforests, with all finite $k$-uniform hyperforests lying in  $\mathsf{SP}_{\rm fin}(\mathbb{E}_k)$.
\end{lem}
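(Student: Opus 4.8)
The plan is to verify, for $\mathbb{M}=\mathbb{E}_k$, the three separation conditions (SEP1)--(SEP3) for an arbitrary $k$-uniform hyperforest $\mathbb{F}=\langle V;E\rangle$. Once these hold, the map $s\mapsto(\phi(s))_{\phi\in\hom(\mathbb{F},\mathbb{E}_k)}$ embeds $\mathbb{F}$ into the direct power $\mathbb{E}_k^{\hom(\mathbb{F},\mathbb{E}_k)}$, placing $\mathbb{F}$ in $\mathsf{SP}(\mathbb{E}_k)$; when $\mathbb{F}$ is finite the index set $\hom(\mathbb{F},\mathbb{E}_k)$ is finite and we land in $\mathsf{SP}_{\mathrm{fin}}(\mathbb{E}_k)$. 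The key translation is that a homomorphism $\mathbb{F}\to\mathbb{E}_k$ is exactly a \emph{rainbow colouring}: since $r^{\mathbb{E}_k}$ consists of the $k!$ permutations of $(v_1,\dots,v_k)$, a map $\gamma\colon V\to\{v_1,\dots,v_k\}$ is a homomorphism precisely when it is injective on each hyperedge, that is, when every hyperedge receives all $k$ colours. I would first settle everything for finite $\mathbb{F}$ and then pass to arbitrary $\mathbb{F}$ by compactness: any finite set of colouring constraints lives inside a finite $k$-uniform sub-hyperforest (the finitely many edges involved, together with their vertices), so solvability of the finite instances forces a global colouring.

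Two consequences of acyclicity drive the combinatorics. First, any two distinct hyperedges meet in at most one vertex, since a shared pair of distinct vertices would yield a $2$-cycle. Second, if $s_1,\dots,s_k$ are distinct and $\{s_1,\dots,s_k\}\notin E$, then some pair $s_i,s_j$ lies in no common hyperedge: were every pair co-edged, the first fact would (via a short $3$-cycle argument and induction) force all of $s_1,\dots,s_k$ into a single $k$-element edge, contradicting that the set is a non-edge. For (SEP1) I would prove that every finite $k$-uniform hyperforest has a rainbow colouring by induction on $|E|$, deleting a leaf $e$ together with its $k-1$ private vertices, colouring the smaller hyperforest by hypothesis, and extending: the unique possibly-shared vertex of $e$ already has a colour, and the $k-1$ private vertices can be given the $k-1$ remaining colours bijectively.

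For (SEP2) and (SEP3) I must produce rainbow colourings with prescribed behaviour at two vertices $x\neq y$ lying in no common hyperedge (the co-edged case of (SEP2) being automatic from any rainbow colouring, and the repeated-coordinate case of (SEP3) being trivial). Here I would fix a \emph{construction order} $e_1,\dots,e_m$ of the hyperedges in which each $e_i$ meets $e_1\cup\dots\cup e_{i-1}$ in at most one vertex (the reverse of repeated leaf-deletion), and colour greedily: when $e_i$ is introduced, its at most one old vertex is already coloured and its $\ge k-1$ new vertices may be assigned any bijection onto the remaining colours. Because $k\ge 3$, each vertex has at least two admissible colours at the moment it is introduced. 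To realise $\gamma(x)\neq\gamma(y)$, order the edges so that $x$ is introduced before $y$ (start the relevant hypertree at an edge containing $x$); then $x$ is already coloured when $y$ appears, and the $\ge 2$ choices for $y$ include one avoiding $\gamma(x)$. To realise $\gamma(x)=\gamma(y)$, I would instead work along the unique path of edges joining $x$ and $y$: since they are not co-edged this path has at least one intermediate junction vertex, so $x$ and $y$ sit in distinct edges of the path and their colours can be threaded to a common value while keeping each edge rainbow, after which the rest of the hyperforest hangs off already-coloured junctions and is filled in greedily. (If $x,y$ lie in different components one colours the two components independently and uses that rainbow colourings are closed under permuting the colour set, to align or to separate the two values.)

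The main obstacle is precisely this last point underlying (SEP3): forcing two far-apart vertices to share a colour. This is where both acyclicity (guaranteeing a genuine non-co-edged pair inside any non-edge $k$-set) and the hypothesis $k\ge 3$ are essential; with only two colours the path between $x$ and $y$ imposes a parity constraint and the required freedom disappears, which is exactly why the case $k=2$ (where $\mathbb{E}_2=\mathbb{K}_2$) is excluded. Once (SEP1)--(SEP3) are established, the embedding into a power of $\mathbb{E}_k$ completes the finite case, and the compactness reduction extends the conclusion to all $k$-uniform hyperforests.
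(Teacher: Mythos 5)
Your proposal is correct, and while it shares the paper's overall skeleton---verifying (SEP1)--(SEP3) for $\mathbb{E}_k$, embedding $\mathbb{F}$ into $\mathbb{E}_k^{\hom(\mathbb{F},\mathbb{E}_k)}$, and transferring from finite to arbitrary hyperforests (you by compactness of colourings, the paper by embedding a structure into an ultraproduct of its finite substructures; these are interchangeable here since $\mathbb{E}_k$ is finite)---the combinatorial core is organised quite differently. The paper runs a single induction on the number of hyperedges, deleting a leaf and maintaining all three separation conditions simultaneously; its crucial step is the (SEP3) case where the non-edge $k$-set meets the deleted leaf, resolved by setting $\phi'(w_j):=\phi(w_1)$ for a homomorphism $\phi$ of the smaller hyperforest that separates $u$ from $w_1$, so (SEP2) and (SEP3) are deliberately intertwined through the induction hypothesis. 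You instead factor out a structural lemma---distinct hyperedges of a hyperforest meet in at most one vertex, whence every non-edge $k$-set of distinct vertices contains a pair lying in no common hyperedge (this is where acyclicity enters for you, playing the role of the paper's observation that the non-edge set ``contains at least one element from $F_{n-1}$ other than $u$'')---and thereby reduce (SEP3) to the single problem of equalising the colours of a non-co-edged pair, which you solve globally by a leaf-elimination construction order plus threading along the unique edge-path between $x$ and $y$; the hypothesis $k\geq 3$ enters for you as the two free colours at each junction, exactly where the paper uses it via the $(k-1)!$ extensions and the choice of $\phi'$. Your route buys a more transparent picture of the one genuine obstruction (the parity constraint that kills the $k=2$ case, as you note) and stronger control, since you can prescribe equality or inequality at any non-co-edged pair directly; the cost is that you must verify routine facts the induction packages for free, and two of these deserve a line in a full write-up: non-consecutive edges of the shortest path are disjoint (else a cycle), so each path edge sees exactly one previously coloured vertex and you need only steer the final junction $z_{m-1}$ away from the colour $c=\gamma(x)$ (possible since $k-2\geq 1$); and the remaining branches may hang off \emph{any} already-coloured path vertex, not only junctions, though the greedy extension is unaffected.
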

\begin{proof}
Every relational structure embeds into an ultraproduct of its finite substructures, it suffices to prove the lemma in the case of finite hyperforests.  Thus if we show that every finite $k$-uniform hyperforest lies in $\mathsf{SP}(\mathbb{E}_k)$, then it follows that every $k$-uniform hyperforest lies in $\mathsf{SP}(\mathbb{E}_k)$.  Let $\mathbb{F}=\langle F;r^\mathbb{F}\rangle$ be a finite $k$-uniform hyperforest, with $r^\mathbb{F}$ the fundamental $k$-ary relation.  

We proceed by induction on the number,~$n$, of hyperedges of $\mathbb{F}$.  For simplicity, we will assume that there are no isolated points as it is close to trivial to extend the separation conditions below to include these.

The base case with $n=1$ is trivial, so assume that every $k$-uniform hyperforest with at most $n-1$ hyperedges belongs to the class~$\mathsf{SP}(\mathbb{E}_k)$.  Let $e=\{u_1, \dots, u_k\}$ be a leaf in $\mathbb{F}$.  At most one vertex in $e$ lies in any other hyperedge; if it it exists denote it by $u$, which otherwise is a symbol not equal to the label of any vertex.  Now let~$\mathbb{F}_{n-1}$ be the subhyperforest induced by removing the elements $\{u_1, \dots, u_k\}\setminus \{u\}$ from $\mathbb{F}$.  
By the induction hypothesis, we have $\mathbb{F}_{n-1}\in \mathsf{SP}(\mathbb{E}_k)$ and so conditions (SEP1)--(SEP3) hold.
We now show that $\mathbb{F}\in \mathsf{SP}(\mathbb{E}_k)$.  First we show that every homomorphism $\phi\colon\mathbb{F}_{n-1}\to\mathbb{E}_k$ extends to a homomorphism $\phi^+\colon\mathbb{F}\to\mathbb{E}_k$, giving~(SEP1).  Simply define $\phi^+(v):=\phi(v)$ for all $v\in \mathbb{F}_{n-1}$ (in particular $u$ is sent to $\phi(u)$) and send each element in $\{u_1, \dots, u_k\}\setminus \{u\}$ to a different element of $\{v_1, \dots, v_k\}\setminus\phi(u)$.  When $u\notin F$, this simply means we map $\{u_1, \dots, u_k\}$ onto $\{v_1, \dots, v_k\}$, giving $k!$ possible choices for $\phi^+$.  When $u\in F$ there are $(k-1)!$ choices for $\phi^+$.

Now let $e'=(w_1, \dots, w_k)\notin r^{\mathbb{F}}$ be any non-hyperedge of $\mathbb{F}$ (for verifying (SEP3)) and let $x, y\in F$ with $x\neq y$ (for verifying (SEP2)). There are two cases to consider. 

\emph{Case} $1$: If $\{w_1, \dots, w_k\}$ is a subset of $F_{n-1}$, then (SEP3) in the case of $\mathbb{F}_{n-1}$ guarantees the existence of a homomorphism $\gamma\colon\mathbb{F}_{n-1}\to\mathbb{E}_k$ mapping $e'$ strictly into $\{v_1,\dots,v_k\}$ (that is, to a non-hyperedge of $\mathbb{E}_k$).  Then $\gamma^+\colon\mathbb{F}\to\mathbb{E}_k$ is the desired homomorphism for (SEP3).  The same technique applies if the pair $x,y$ with $x\neq y$ both lie in $F_{n-1}$, giving (SEP2). 

\emph{Case} $2$: If $e'=(w_1, \dots, w_k)$ contains an element $w_j$ not in $F_{n-1}$, then $w_j$ is an element of $\{u_1, \dots, u_k\}\backslash\{u\}$.  If $|\{w_1,\dots,w_k\}|<k$ then any homomorphism from $\mathbb{F}$ to $\mathbb{E}_k$ will fail to map $\{w_1,\dots,w_k\}$ onto $\{v_1,\dots,v_k\}$: since there exists a homomorphism $\phi$ from $\mathbb{F}_{n-1}$ by (SEP1), the homomorphism $\phi^+$ completes the argument for (SEP3) in this subcase.  

Now assume that $|\{w_1,\dots,w_k\}|=k$, and observe that since $\{w_1, \dots, w_k\}$ is not a hyperedge of $\mathbb{F}$, it cannot be equal to the hyperedge $\{u_1, \dots, u_k\}$, and so contains at least one element from $F_{n-1}$ other than $u$. Without loss of generality we may assume that $w_1$ is such an element.  Note that  $w_1\neq w_j$ because $w_j\notin F_{n-1}$ by assumption.  Fix any homomorphism $\phi\colon\mathbb{F}_{n-1}\to\mathbb{E}_k$ separating $u$ from $w_1$ (which exists because~$\mathbb{F}_{n-1}$ satisfies~(SEP2)) and define a homomorphism $\phi'$ from $\mathbb{F}$ to $\mathbb{E}_k$ in the following way.  For $a\in {F}_{n-1}$, define $\phi'(a):=\phi(a)$ and define $\phi'(w_j):=\phi(w_1)$.  Finally, let $\phi'$ send each element in $\{u_1, \dots, u_k\}\setminus \{u, w_j\}$ to a different element of $\{v_1, \dots, v_k\}\setminus \{\phi'(u), \phi'(w_j)\}$. Clearly, the map $\phi'$ is a homomorphism, and it maps~$e'$ to a non-hyperedge since $\phi'(w_1)=\phi'(w_j)$ implies that $\{\phi'(w_1), \dots , \phi'(w_k)\}\subsetneq \{v_1, \dots, v_k\}$.  Thus (SEP3) holds.

To separate the pair $x\neq y$ when at least one of $x,y$ is not in $F_{n-1}$, simply take any homomorphism $\phi\colon\mathbb{F}_{n-1}\to\mathbb{E}_k$ and note that a large number of the $(k-1)!$ choices for $\phi^+$ will separate $x$ from $y$, establishing (SEP2).
\end{proof}

\begin{lem}\label{lem:NAE}
Let $k>\ell>1$ and $\mathbb{E}=\langle \{v_1,\dots,v_\ell\};\{\{v_1,\dots,v_\ell\}\}\rangle$ be a hypergraph with exactly one hyperedge.  Then if $\mathbb{E}$ is treated as a $k$-hypergraph structure, the $\mathsf{SP}$-class of $\mathbb{E}$ includes the $k$-uniform hypergraph $\mathbb{E}_k:=\langle \{u_1,\dots,u_k\};\{\{u_1,\dots,u_k\}\}\rangle$.
\end{lem}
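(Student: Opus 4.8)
The plan is to invoke the separation conditions (SEP1)--(SEP3) with $\mathbb{M}:=\mathbb{E}$ and $\mathbb{S}:=\mathbb{E}_k$. Both structures are finite, so by the remark preceding Lemma~\ref{lem:hyperforest} it suffices to verify these three conditions; this will even realise $\mathbb{E}_k$ inside a finite direct power of $\mathbb{E}$. The first step is to describe the homomorphisms $\mathbb{E}_k\to\mathbb{E}$ explicitly. Since $\mathbb{E}_k$ has the single hyperedge $\{u_1,\dots,u_k\}$ and $\mathbb{E}$ has the single hyperedge $\{v_1,\dots,v_\ell\}$, a map $f\colon\{u_1,\dots,u_k\}\to\{v_1,\dots,v_\ell\}$ is a homomorphism precisely when $f(\{u_1,\dots,u_k\})=\{v_1,\dots,v_\ell\}$, that is, precisely when $f$ is a \emph{surjection}. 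Thus the homomorphisms $\mathbb{E}_k\to\mathbb{E}$ are exactly the surjections of a $k$-element set onto an $\ell$-element set, and these exist because $k>\ell$.

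With this identification the first two separation conditions are immediate. (SEP1) is just the existence of such a surjection. For (SEP2), given distinct $u_i,u_j$ I would send $u_i\mapsto v_1$, $u_j\mapsto v_2$ and spread the remaining $k-2$ vertices over $\{v_1,\dots,v_\ell\}$ so as to hit every colour; this is possible since $k-2\geq\ell-2$, and the resulting surjection separates $u_i$ from $u_j$ because $\ell\geq2$.

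The substantive step is (SEP3). A tuple $(w_1,\dots,w_k)\in\{u_1,\dots,u_k\}^k$ fails to lie in $r^{\mathbb{E}_k}$ exactly when its entry-set $W:=\{w_1,\dots,w_k\}$ is a proper subset of $\{u_1,\dots,u_k\}$, so $1\leq|W|\leq k-1$. I must produce a homomorphism (surjection) $\gamma$ whose image $\gamma(W)$ is a proper subset of $\{v_1,\dots,v_\ell\}$, for then the entry-set of $(\gamma(w_1),\dots,\gamma(w_k))$ is a proper subset of $\{v_1,\dots,v_\ell\}$ and the tuple is a non-edge of $\mathbb{E}$. The idea is to force $\gamma$ to avoid one colour, say $v_\ell$, on all of $W$: map every vertex of $W$ into $\{v_1,\dots,v_{\ell-1}\}$, reserve one of the (at least one) vertices outside $W$ to take the value $v_\ell$, and use the remaining vertices to make $\gamma$ surjective onto $\{v_1,\dots,v_{\ell-1}\}$. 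This is where the hypothesis $k>\ell$ does the real work: after dedicating one vertex to $v_\ell$ there remain $k-1\geq\ell-1$ vertices available to cover the $\ell-1$ colours $v_1,\dots,v_{\ell-1}$, so a surjection of the required form exists. Since $\gamma(W)\subseteq\{v_1,\dots,v_{\ell-1}\}$ misses $v_\ell$, condition (SEP3) holds, and the three conditions together yield $\mathbb{E}_k\in\mathsf{SP}(\mathbb{E})$.

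I expect (SEP3) to be the only point requiring care: one must simultaneously keep $\gamma$ surjective (to remain a homomorphism) and confine its values on the proper sub-tuple $W$ to fewer than $\ell$ colours. The elementary counting that guarantees enough spare vertices to do both at once is precisely what the hypothesis $k>\ell\geq2$ supplies.
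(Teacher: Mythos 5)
Your proof is correct and takes essentially the same approach as the paper: both identify the homomorphisms $\mathbb{E}_k\to\mathbb{E}$ with the surjections of $\{u_1,\dots,u_k\}$ onto $\{v_1,\dots,v_\ell\}$ and then verify (SEP1)--(SEP3), with the substance in (SEP3). Your uniform construction there (confine $W$ to $\{v_1,\dots,v_{\ell-1}\}$, reserve a vertex outside $W$ for $v_\ell$, and use the spare vertices to restore surjectivity) is just a repackaging of the paper's split into the cases $|W|<\ell$ and $|W|\geq\ell$, and your counting via $k>\ell\geq 2$ is sound.
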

\begin{proof}
The homomorphisms from $\mathbb{E}_k$ to $\mathbb{E}$ coincide with the surjective maps from $\{u_1,\dots,u_k\}$ onto $\{v_1,\dots,v_\ell\}$.  It is trivial that such maps exist (SEP1) and that any pair of points may be separated by a suitable map, given that $\ell\geq 2$ (SEP2).  For (SEP3), a non-hyperedge of $\mathbb{E}_k$ as a $k$-hypergraph structure is any $k$-tuple that has a repeat, say,  $(u_{i_1},\dots,u_{i_k})$ with $|\{u_{i_1},\dots,u_{i_k}\}|=j$ for some $j<k$.  If $j<\ell$ then every homomorphism maps $(u_{i_1},\dots,u_{i_k})$ to a non-hyperedge.  If $j\geq \ell$, then map $\{u_{i_1},\dots,u_{i_k}\}$ onto $\{v_1,\dots,v_{\ell-1}\}$ and all remaining $k-j$ elements of $\{u_1,\dots,u_k\}$ onto $\{v_\ell\}$.
\end{proof}
Consider the following $\mathsf{SP}$-classes generated by a single hypergraph.
\begin{itemize}
\item Let $\mathbb{G}_1=\langle\{1\}; \varnothing\rangle$ be the edgeless hypergraph on one vertex and let $\mathscr{Q}_{1(k)}=\mathsf{SP}(\mathbb{G}_1)$, where $\mathbb{G}_1$ is treated as a $k$-hypergraph structure.

\item Let $\mathbb{G}_2=\langle\{1, 2\}; \varnothing\rangle$ be the edgeless hypergraph on two vertices and let $\mathscr{Q}_{2(k)}=\mathsf{SP}(\mathbb{G}_2)$, where $\mathbb{G}_2$ is treated as a $k$-hypergraph structure.
\end{itemize}

\begin{proof}[Proof of Theorem \ref{thm:main}]
The following argument applies whenever~$\mathscr{K}$ is a class of loopfree hypergraphs of finite bounded hyperedge cardinality $c$ and $k\geq c$; in the theorem statement, the class $\mathscr{H}$ is $\mathsf{SP}(\mathscr{K})$.  If all members of $\mathscr{K}$ have no hyperedges, then the universal Horn class generated by $\mathscr{K}$ is equal to either $\mathscr{Q}_{1(k)}$ or~$\mathscr{Q}_{2(k)}$.  Now assume that $\mathscr{K}$ contains a hypergraph with at least one hyperedge.  The case where $k=c=2$ is covered by Caicedo \cite{cai}, so now assume that $k>2$.  Let~$\mathbb{H}$ be a hypergraph in $\mathscr{K}$ containing a hyperedge $e$.  Assume that $e$ has minimal cardinality amongst the hyperedges of $\mathbb{H}$, and let $\mathbb{E}$ denote the induced substructure on the elements of $e$, which consists of a single hyperedge $e$ and lies in $\mathsf{S}(\mathscr{K})$.  As $k\geq 3$ we find by Lemma \ref{lem:NAE} that the singleton hyperedge $k$-uniform tree $\mathbb{E}_k$ lies in $\mathsf{SP}(\mathbb{E})\subseteq \mathsf{SP}(\mathscr{K})$.  We will show that $\mathsf{SP}(\mathscr{K})$ is not definable by any universal sentence by showing that for every $n\in {\mathbb N}$ there exists a $k$-uniform hypergraph $\mathbb{U}_n$ such that the following properties hold.
\begin{itemize}
\item  The structure $\mathbb{U}_n$ is not in $\mathsf{SP}(\mathscr{K})$.
\item Every $n$-generated substructure of $\mathbb{U}_n$ belongs to $\mathsf{SP}(\mathbb{E})\subseteq \mathsf{SP}(\mathscr{K})$. 
\end{itemize}
For $n\in\mathbb{N}$, Theorem~\ref{thm:erdhaj} shows that there exists a finite $k$-uniform hypergraph $\mathbb{U}_n$ with chromatic number strictly greater than that of $\mathscr{K}$ and has no cycles of length less than $n+1$. 
This necessarily places $\mathbb{U}_n$ outside of $\mathsf{SP}(\mathscr{K})$, as there are no homomorphisms from $\mathbb{U}_n$ into any member of $\mathscr{K}$. 
However, an $n$-element induced substructure of $\mathbb{U}_n$ is a $k$-uniform hyperforest, so lies in $\mathsf{SP}(\mathbb{E}_k)\subseteq \mathsf{SP}(\mathbb{E})\subseteq \mathsf{SP}(\mathscr{K})$, by Lemma~\ref{lem:hyperforest}.  
\end{proof}

\begin{remark}
We may also extend a result of Trotta  from the class of simple graphs to the class of hypergraphs.  Trotta \cite[Theorem 2.4]{tro} showed that a simple graph is \emph{standard} in the sense of 
Clark et al.~\cite{CDHPT} if and only if it either has no edges or consists only of disjoint unions of isolated points and single edge graphs.  A version of Theorem \ref{thm:fedvar} is used (via a construction from \cite{CDJP}) to show nonstandardness for any graph not equal to a disjoint union of complete bipartite graphs; see the proof of Theorem 3.9 in \cite{tro}.  An identical argument for hypergraphs, shows that for $k\geq 3$, a $k$-hypergraph structure is standard if and only if it has no hyperedges.  This also positively answers Problem~3 of~\cite{CDJP} in the particular case of hypergraphs.
\end{remark}

\section{Universal Horn class lattices are continuum in cardinality}\label{lem:continuum}
It is shown in Caicedo \cite{cai} that there are continuum many universal Horn classes of graphs.  The argument has an easy adaptation to the present setting, but we instead follow a substantial extension of  Caicedo's result proved by Bonato~\cite{bon}: any interval in the homomorphism order on simple graphs (above the bipartite graphs) contains continuum many universal Horn classes.  Indeed, Bonato's very short argument shows that it suffices to show that intervals in the homomorphism order satisfy a density property.  We mention that a density result corresponding to that cited by Bonato is known for hypergraphs---Ne\v{s}et\v{r}il \cite[Theorem~1.4]{nes}---however the proof there makes intrinsic use of hypergraphs of increasingly large hyperedge cardinality and so is not available here.  In the proof of the following theorem we find an alternative proof of \cite[Theorem~1.4]{nes} involving bounded hyperedge cardinality.  As usual, we write $\mathbb{A}\rightarrow \mathbb{B}$ to denote the existence of a homomorphism from~$\mathbb{A}$ to $\mathbb{B}$.
\begin{thm}\label{thm:continuum}
Let $\mathbb{G}_1$ and $\mathbb{G}_2$ be finite $k$-hypergraph structures, both containing at least one hyperedge.  For $i=1,2$, let $\mathscr{U}_i$ denote the universal Horn class of all $k$-hypergraph structures admitting a homomorphism into $\mathbb{G}_i$.  If  $\mathbb{G}_1\rightarrow\mathbb{G}_2$ but $\mathbb{G}_2\not\rightarrow\mathbb{G}_1$ \up(equivalently, $\mathbb{G}_1\in \mathscr{U}_2$ but $\mathbb{G}_2\notin \mathscr{U}_1$\up), then there is a continuum of universal Horn classes between $\mathscr{U}_1$ and $\mathscr{U}_2$.  
\end{thm}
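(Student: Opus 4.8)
The plan is to bypass any abstract density argument and instead build, directly from Theorem~\ref{thm:fedvar}, an infinite pairwise-incomparable family of finite $k$-uniform hypergraphs lying in $\mathscr{U}_2\setminus\mathscr{U}_1$, and then to index universal Horn classes by its subsets, which is the combinatorial heart of Bonato's method and simultaneously realises the required extension of Ne\v{s}et\v{r}il's density result with bounded hyperedge cardinality. Throughout, recall that $\mathscr{U}_i=\{\mathbb{A}:\mathbb{A}\to\mathbb{G}_i\}$ is closed under $\mathsf{S}$, $\mathsf{P}$ and $\mathsf{P}_{\mathrm u}$, and that $\mathbb{G}_1\to\mathbb{G}_2$ together with $\mathbb{G}_2\not\to\mathbb{G}_1$ gives $\mathscr{U}_1\subsetneq\mathscr{U}_2$.

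The single observation driving everything is the following consequence of Lemma~\ref{lem:hyperforest}. Since $\mathbb{G}_1$ has a hyperedge we have $\mathbb{E}_k\to\mathbb{G}_1$, and every finite $k$-uniform hyperforest lies in $\mathsf{SP}(\mathbb{E}_k)$, hence maps to $\mathbb{E}_k$ and so to $\mathbb{G}_1$. Consequently, if a finite $k$-uniform hypergraph $\mathbb{X}$ satisfies $\mathbb{X}\not\to\mathbb{G}_1$, then $\mathbb{X}$ admits no homomorphism into any $k$-uniform hyperforest. Now suppose $\mathbb{Y}$ is a $k$-uniform hypergraph none of whose cycles has size at most $|\mathbb{X}|$: any homomorphic image of $\mathbb{X}$ inside $\mathbb{Y}$ spans at most $|\mathbb{X}|$ vertices and so can contain no cycle, and being $k$-uniform it is a hyperforest; hence $\mathbb{X}\to\mathbb{Y}$ would force $\mathbb{X}\to\mathbb{G}_1$, a contradiction, so $\mathbb{X}\not\to\mathbb{Y}$. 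In words, a structure obstructed from $\mathbb{G}_1$ cannot be collapsed into a sufficiently high-girth target; this is exactly the place where bounded hyperedge cardinality must be controlled, and it substitutes for Ne\v{s}et\v{r}il's use of unboundedly large hyperedges.

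With this in hand I would construct the family $\mathbb{F}_1,\mathbb{F}_2,\dots$ recursively. Given $\mathbb{F}_1,\dots,\mathbb{F}_{i-1}$, set $\mathbb{H}_1:=\mathbb{G}_1\sqcup\mathbb{F}_1\sqcup\dots\sqcup\mathbb{F}_{i-1}$ and apply Theorem~\ref{thm:fedvar} to the pair $(\mathbb{H}_1,\mathbb{G}_2)$ with a girth parameter $\ell_i>|\mathbb{H}_1|+|\mathbb{G}_2|$. The hypothesis $\mathbb{G}_2\not\to\mathbb{H}_1$ holds by the previous paragraph: $\mathbb{G}_2\not\to\mathbb{G}_1$ keeps some connected component of $\mathbb{G}_2$ away from $\mathbb{G}_1$, and that component cannot reach any of the high-girth $\mathbb{F}_j$. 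Theorem~\ref{thm:fedvar} then yields a $k$-uniform $\mathbb{F}_i$ all of whose cycles have size greater than $\ell_i$, with $\mathbb{F}_i\to\mathbb{G}_2$ and $\mathbb{F}_i\not\to\mathbb{H}_1$; the last of these gives at once $\mathbb{F}_i\not\to\mathbb{G}_1$ and $\mathbb{F}_i\not\to\mathbb{F}_j$ for every $j<i$. For the reverse comparisons, $\mathbb{F}_j\not\to\mathbb{F}_i$ for $j<i$ follows from the girth observation, since $|\mathbb{F}_j|\le\ell_i$ and $\mathbb{F}_j\not\to\mathbb{G}_1$; the same observation gives $\mathbb{G}_2\not\to\mathbb{F}_i$. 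Thus $\{\mathbb{F}_i\}$ is pairwise incomparable, with each member in $\mathscr{U}_2\setminus\mathscr{U}_1$.

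Finally, for each $I\subseteq\omega$ let $\mathscr{W}_I$ be the universal Horn class generated by $\mathscr{U}_1\cup\{\mathbb{F}_i:i\in I\}$, so that $\mathscr{U}_1\subseteq\mathscr{W}_I\subseteq\mathscr{U}_2$. The assignment $I\mapsto\mathscr{W}_I$ is injective: if $j\in I\setminus J$ then $\mathbb{F}_j\in\mathscr{W}_I$, whereas membership of the finite structure $\mathbb{F}_j$ in $\mathscr{W}_J=\mathsf{SPP}_{\mathrm u}(\mathscr{U}_1\cup\{\mathbb{F}_i:i\in J\})$ would yield a homomorphism from $\mathbb{F}_j$ into a nonempty product of ultraproducts of the generators, and hence, after a projection and an application of the {\L}o\'s theorem, into a single generator --- either a member of $\mathscr{U}_1$ (so into $\mathbb{G}_1$) or some $\mathbb{F}_i$ with $i\in J$, $i\ne j$ --- each of which is impossible. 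This produces $2^{\aleph_0}$ distinct universal Horn classes between $\mathscr{U}_1$ and $\mathscr{U}_2$, and restricting to infinite co-infinite $I$ (using $\mathbb{F}_j\notin\mathscr{W}_I$ for $j\notin I$, together with $\mathbb{G}_2\notin\mathscr{W}_\omega$) keeps all the inclusions strict. The one genuinely delicate step, which I expect to be the main obstacle, is the reverse incomparability $\mathbb{F}_j\not\to\mathbb{F}_i$ under the bounded-arity constraint; everything else is forced either by Theorem~\ref{thm:fedvar} or by the hyperforest observation above, which is precisely what renders that step tractable here.
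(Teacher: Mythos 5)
Your proof is correct, but it takes a different route from the paper's. The paper reduces the whole theorem to a single density statement: it exhibits one hypergraph $\mathbb{H}=\mathbb{G}_2^\sharp\cup\mathbb{G}_1$ lying strictly between $\mathbb{G}_1$ and $\mathbb{G}_2$ in the homomorphism order (one application of Theorem~\ref{thm:fedvar} with $\ell=|G_2|+1$, plus the observation---via Lemmas~\ref{lem:hyperforest} and~\ref{lem:NAE}---that a structure not mapping to $\mathbb{G}_1$ cannot map into any hyperforest) and then invokes the argument of Bonato \cite{bon} to convert density of the interval into continuum many universal Horn classes. You instead bypass density and inline the cardinality argument: iterating Theorem~\ref{thm:fedvar} against the growing disjoint unions $\mathbb{G}_1\sqcup\mathbb{F}_1\sqcup\dots\sqcup\mathbb{F}_{i-1}$ manufactures an explicit infinite homomorphism antichain in $\mathscr{U}_2\setminus\mathscr{U}_1$, and the classes $\mathscr{W}_I=\mathsf{SPP}_{\rm u}(\mathscr{U}_1\cup\{\mathbb{F}_i\mid i\in I\})$ are then separated by the standard projection-plus-{\L}o\'s argument for a finite structure in a generated universal Horn class (a finite member must map into a single generator, which your antichain properties forbid). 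The engine is identical in both proofs---sparse incomparability together with the fact that a small homomorphic image inside a high-girth $k$-uniform target spans a hyperforest, which maps to $\mathbb{G}_1$; indeed your $i=1$ step, in the form $\mathbb{G}_1\sqcup\mathbb{F}_1$, recovers exactly the paper's density witness. What your version buys is self-containment: no appeal to \cite{bon}, and the incomparability bookkeeping is completely explicit. What the paper's version buys is the isolated density statement, which is of independent interest as the bounded-arity replacement for Ne\v{s}et\v{r}il's density theorem \cite[Theorem~1.4]{nes}. Two small points to tidy, neither of which is a genuine gap (the paper's own proof shares both): your girth observation is stated for $k$-uniform $\mathbb{X}$ but is later applied to connected components of $\mathbb{G}_2$, which need not be uniform---this is harmless, since the argument only uses uniformity of the target $\mathbb{Y}$, and you should state it that way; and the appeal to Lemma~\ref{lem:hyperforest} (hence to $\mathbb{E}_k\to\mathbb{G}_1$) formally requires $k\geq 3$, so in the case $k=2$ you should instead invoke the classical fact that forests are $2$-colourable, i.e.\ admit homomorphisms to $\mathbb{K}_2$, as in the graph setting of Caicedo and Bonato.
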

\begin{proof}
The argument of Bonato (in the proof of \cite[Proposition 6]{bon}) applies immediately, provided we can show that the homomorphism order is dense between $\mathscr{U}_1$ and $\mathscr{U}_2$.  
It suffices to show that there is a hypergraph $\mathbb{H}$ lying strictly between $\mathbb{G}_1$ and $\mathbb{G}_2$ in the homomorphism order.

Because both $\mathbb{G}_1$ and $\mathbb{G}_2$ contain a hyperedge, it follows by Lemmas \ref{lem:hyperforest} and \ref{lem:NAE} (and (SEP1)) that all hyperforests admit a homomorphism into both $\mathbb{G}_1$ and $\mathbb{G}_2$.  Then the property $\mathbb{G}_2\not\rightarrow \mathbb{G}_1$ shows that $\mathbb{G}_2$ does not admit a homomorphism into any hyperforest.  Let $\mathbb{G}_2^\sharp$ be the $k$-uniform hyperforest shown to exist in Theorem \ref{thm:fedvar}, with $\ell:=|G_2|+1$, and let $\mathbb{H}$ be the $k$-hypergraph structure $\mathbb{G}_2^\sharp\cup \mathbb{G}_1$.  Then $\mathbb{H}\rightarrow\mathbb{G}_2$.  Also, $\mathbb{G}_1\rightarrow\mathbb{H}$ but $\mathbb{H}\not\rightarrow\mathbb{G}_1$.  Thus, it remains to show that $\mathbb{G}_2\not\rightarrow\mathbb{H}$.  Now, at least one component of $\mathbb{G}_2$ does not homomorphically map into $\mathbb{G}_1$, by assumption.  To complete the proof, assume for contradiction that this component homomorphically maps into the $\mathbb{G}_2^\sharp$ component of $\mathbb{H}$.  By property (3) of Theorem~\ref{thm:fedvar}, this component maps into a sub-hypertree of $\mathbb{G}_2^\sharp$, contradicting the fact that $\mathbb{G}_2$ does not have a homomorphism into any hyperforest.
\end{proof}

\section{Axiomatisability at the finite level}\label{sec:finite}
Let $\mathscr{H}$ be an $\textsf{SP}_{\rm fin}$-closed class of \emph{finite} hypergraphs of bounded chromatic number (and hyperedge cardinality).  The proof of Theorem \ref{thm:main} shows that unless $\mathscr{H}$ consists only of disjoint unions of complete bipartite graphs, no finite set of universal Horn sentences can axiomatise $\mathscr{H}$ amongst finite structures.   A classical model-theoretic intuition (namely, the \emph{\textsf{SP}-Preservation Theorem}; see McNulty~\cite{mcn}) would then imply that no first order sentence can define $\mathscr{H}$.  In the restriction to finite structures however, there is no completely general \textsf{SP}-Preservation Theorem---see \cite[Example 4.3]{CDJP}---though the possibility of such a result remains an open problem in the case of relational signatures; see \cite[Problem~1]{alegur} and \cite[\S2.4.2]{ros}.
In this section we provide an argument that shows that the intuition is nevertheless correct in the case of hypergraphs: $\mathscr{H}$ cannot be defined by any first order sentence at the finite level.  

We prove a  more general result, deducing the finite level version of Theorem~\ref{thm:main} as a corollary.  For any relational structure $\mathbb{A}$, we let $\overline{\mathbb{A}}$ be the graph on the same underlying set $A$, with edge relation obtained by placing an undirected edge between $a,b\in A$ whenever $a$ and $b$ appear together in the tuple of one of the relations of~$\mathbb{A}$.  When~$\mathbb{A}$ is a graph we have $\overline{\mathbb{A}}=\mathbb{A}$.  Define the distance $d_{\mathbb{A}}(a,b)$ between two vertices $a,b$ in $\mathbb{A}$ to be the length of the shortest path of edges between $a$ and $b$ in~$\overline{\mathbb{A}}$.    Note that the distance may be infinite, which we denote by $d(a,b)=\infty$.  When $a=b$ the distance $d(a,b)$ is $0$.  Let the \emph{$n$-ball}~$B_n(a)$ of $a$ in $\mathbb{A}$ be the set  $\{x\in A\mid d(x,a)\leq n\}$ and let~$\mathbb{B}_n(a)$ denote the induced substructure of $\mathbb{A}$ on $B_n(a)$.  Note that the distance of any $b\in B_n(a)$ from $a$ in~$\mathbb{B}_n(a)$ remains equal to the distance from $b$ to $a$ in $\mathbb{A}$, but in general the distance between two elements of $B_n(a)$ distinct from 
$a$ may be larger in $\mathbb{B}_n(a)$ than in $\mathbb{A}$.  The following easy observation generalises this.
\begin{obs}\label{obs:distance}
Let $b,c$ be elements of an $n$-ball $B_n(a)$ in $\mathbb{A}$ lying at  distance $j$ and~$j'$ from $a$ respectively.  If the distance $\delta$ from $b$ to $c$ in $\mathbb{A}$ is at most $2n-j-j'$, then the distance $d_{\mathbb{B}_n(a)}(b,c)$ from $b$ to $c$ in $\mathbb{B}_n(a)$ is also $\delta$.
\end{obs}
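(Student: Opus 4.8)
The plan is to prove the two inequalities $d_{\mathbb{B}_n(a)}(b,c) \geq \delta$ and $d_{\mathbb{B}_n(a)}(b,c) \leq \delta$ separately. The first is immediate and needs no hypothesis on $\delta$: every edge of $\overline{\mathbb{B}_n(a)}$ arises from a tuple of the induced substructure $\mathbb{B}_n(a)$, which is in particular a tuple of $\mathbb{A}$, so every path in $\overline{\mathbb{B}_n(a)}$ is already a path in $\overline{\mathbb{A}}$, and distances cannot decrease on passing to the substructure. Hence $d_{\mathbb{B}_n(a)}(b,c) \geq d_{\mathbb{A}}(b,c) = \delta$.

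For the reverse inequality I would fix a geodesic $b = x_0, x_1, \dots, x_\delta = c$ in $\overline{\mathbb{A}}$ realising $\delta$, and argue that this path survives intact inside $\mathbb{B}_n(a)$. Two things must be checked. First, every vertex $x_i$ lies in the ball: since $x_i$ lies on a geodesic from $b$ to $c$ we have $d(x_i,b)\leq i$ and $d(x_i,c)\leq \delta-i$, so the triangle inequality gives $d(x_i,a)\leq \min(i+j,\,(\delta-i)+j')$. These two bounds, one increasing and one decreasing in $i$, cross at the value $(\delta+j+j')/2$, and the hypothesis $\delta\leq 2n-j-j'$ forces $(\delta+j+j')/2\leq n$; thus $d(x_i,a)\leq n$ and $x_i\in B_n(a)$ for every $i$.

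The second, and more delicate, point is that each edge $\{x_i,x_{i+1}\}$ of the geodesic must also be an edge of $\overline{\mathbb{B}_n(a)}$; here it is \emph{not} enough that the two endpoints lie in the ball, because $\mathbb{B}_n(a)$ is the induced substructure and retains a tuple only when \emph{all} of its entries lie in $B_n(a)$. The edge $\{x_i,x_{i+1}\}$ comes from some tuple $t$ of $\mathbb{A}$ containing both $x_i$ and $x_{i+1}$, and every entry $t_p$ of $t$ is adjacent in $\overline{\mathbb{A}}$ to both of them, so $d(t_p,a)\leq \min(d(x_i,a),d(x_{i+1},a))+1$. Feeding in $d(x_i,a)\leq i+j$ and $d(x_{i+1},a)\leq (\delta-i-1)+j'$ yields $d(t_p,a)\leq \min(i+j+1,\,(\delta-i)+j')$, whose maximum over $i$ is at most $(\delta+j+j'+1)/2\leq n+\tfrac12$; as this quantity is an integer it is in fact $\leq n$, so every entry of $t$ lies in $B_n(a)$ and the whole tuple, hence the edge, survives in $\mathbb{B}_n(a)$.

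With both points established, the geodesic $x_0,\dots,x_\delta$ is a path of length $\delta$ in $\overline{\mathbb{B}_n(a)}$, so $d_{\mathbb{B}_n(a)}(b,c)\leq \delta$, and combined with the first inequality this gives equality. I expect the tuple-survival step to be the main obstacle: it is the only place where the distinction between the induced substructure and the induced subgraph of the Gaifman graph $\overline{\mathbb{A}}$ actually matters, and it is precisely what forces the extra integrality rounding of $n+\tfrac12$ down to $n$.
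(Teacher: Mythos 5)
Your proof is correct, and its skeleton is the same as the paper's: fix a geodesic $b=x_0,\dots,x_\delta=c$ in $\overline{\mathbb{A}}$ and use the triangle inequality with the bounds $d(x_i,a)\leq i+j$ and $d(x_i,a)\leq(\delta-i)+j'$ to show every vertex stays inside $B_n(a)$ (the paper phrases this as: the first $n-j$ vertices and the last $n-j'$ vertices are within $n$ of $a$, and $\delta\leq 2n-j-j'$ makes these two stretches cover the whole path). Where you genuinely depart from the paper is your tuple-survival step, which has no counterpart there: the published proof stops once the vertices are in the ball, tacitly assuming that an edge of $\overline{\mathbb{A}}$ between two elements of $B_n(a)$ is an edge of $\overline{\mathbb{B}_n(a)}$. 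As you note, this is automatic only for binary relations; for arity $k\geq 3$ --- precisely the hypergraph setting in which Observation~\ref{obs:distance} is applied in the proof of Theorem~\ref{thm:finitelevel} --- a tuple witnessing the edge $\{x_i,x_{i+1}\}$ could a priori contain an entry at distance $n+1$ from $a$ and be discarded by the induced substructure. Your bound $d(t_p,a)\leq\min(i+j+1,(\delta-i)+j')\leq(\delta+j+j'+1)/2\leq n+\tfrac12$, rounded down to $n$ by integrality, rules this out; equivalently, the hypothesis $\delta\leq 2n-j-j'$ forbids two consecutive geodesic vertices from both lying on the boundary of the ball, so the witnessing tuple survives. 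In short, your argument is the paper's argument plus a verification the paper elides, and that extra half-step is genuinely needed for the observation as stated about general relational structures; your trivial lower-bound direction ($d_{\mathbb{B}_n(a)}(b,c)\geq\delta$, since paths in the substructure are paths in $\mathbb{A}$) is likewise left implicit in the paper.
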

\begin{proof}
Consider a path from $b$ to $c$ in $\overline{\mathbb{A}}$ of length $\delta$.  The first $n-j$ elements are distance at most $n-j+j=n$ from $a$, and the final $n-j'$ are distance at most $n-j'+j'$ from $a$.  Thus all lie in $B_n(a)$ showing that the distance from $b$ to $c$ is $\delta$ in $\mathbb{B}_n(a)$ as well.
\end{proof}
 The \emph{boundary} of an $n$-ball $\mathbb{B}_n(a)$ is the set of elements that are distance exactly~$n$ from $a$.  Note that the $n$-ball $\mathbb{B}_n(a)$ can have empty boundary, such as if $n\geq 1$ and $a$ is an isolated point.
\begin{thm}\label{thm:finitelevel}
Let $\mathscr{K}$ \!be an $\mathsf{S}$-closed class of finite structures of some relational signature such that for all $n$ there exists a finite structure $\mathbb{S}_n$ with the following properties\up:
\begin{itemize}
\item   $\mathbb{S}_n\notin \mathscr{K}$\!\up;
\item  The disjoint union of any finite number of copies of $n$-balls in $\mathbb{S}_n$ lies in~$\mathscr{K}$\!.
\end{itemize}
Then $\mathscr{K}$ \!cannot be defined amongst finite structures by any first order sentence.
\end{thm}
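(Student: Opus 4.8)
The plan is to argue by contraposition through an Ehrenfeucht--Fra\"{\i}ss\'e game, exploiting the locality of first order logic. Write $\mathbb{A}\equiv_m\mathbb{B}$ to mean that $\mathbb{A}$ and $\mathbb{B}$ satisfy the same first order sentences of quantifier rank at most $m$. It suffices to produce, for each $m$, a pair of finite structures $\mathbb{A}_m$ and $\mathbb{B}_m$ with $\mathbb{A}_m\in\mathscr{K}$, $\mathbb{B}_m\notin\mathscr{K}$ and $\mathbb{A}_m\equiv_m\mathbb{B}_m$: for then no sentence $\varphi$ of quantifier rank $m$ can define $\mathscr{K}$ at the finite level, since $\mathbb{A}_m\models\varphi$ would force $\mathbb{B}_m\models\varphi$ and hence $\mathbb{B}_m\in\mathscr{K}$, a contradiction.

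To build the pair I would invoke Hanf's locality theorem, itself an Ehrenfeucht--Fra\"{\i}ss\'e game tool: for each $m$ there is a radius $r=r(m)$ so that, for finite structures of any fixed bounded degree, there is a threshold $t$ for which $\mathbb{A}\equiv_m\mathbb{B}$ holds whenever each isomorphism type $\tau$ of a pointed $r$-ball is realised either equally often in $\mathbb{A}$ and $\mathbb{B}$, or at least $t$ times in each. Fixing such an $r$, I would choose $n\geq 2r$ and set $\mathbb{A}:=\bigsqcup_{a\in S_n}N\cdot\mathbb{B}_n(a)$, the disjoint union of $N$ copies of every $n$-ball of $\mathbb{S}_n$, together with $\mathbb{B}:=\mathbb{S}_n\sqcup\mathbb{A}$; here $N\geq t$, with $t$ the threshold for the (bounded) degree of $\mathbb{S}_n$. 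By the standing hypothesis $\mathbb{A}\in\mathscr{K}$, whereas $\mathbb{B}\notin\mathscr{K}$, because $\mathbb{S}_n$ embeds into it as a component and $\mathscr{K}$ is $\mathsf{S}$-closed with $\mathbb{S}_n\notin\mathscr{K}$.

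The crux is then to check that $\mathbb{A}$ and $\mathbb{B}$ have matching local $r$-type censuses up to the threshold. Since a disjoint union introduces no edges between components, every $r$-ball lies inside a single component, so the only possible discrepancy between $\mathbb{A}$ and $\mathbb{B}$ is the extra copy of $\mathbb{S}_n$ in $\mathbb{B}$. I would split the $r$-types into two kinds. If $\tau$ is realised in $\mathbb{S}_n$, say by $b$, then because $n\geq 2r$ Observation~\ref{obs:distance} guarantees that the $r$-ball of the \emph{centre} $b$ inside $\mathbb{B}_n(b)$ is isomorphic to the $r$-ball of $b$ in $\mathbb{S}_n$; hence $\tau$ is realised at least $N$ times in $\mathbb{A}$ (once per copy of $\mathbb{B}_n(b)$), and therefore at least $N\geq t$ times in both $\mathbb{A}$ and $\mathbb{B}$. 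If $\tau$ is \emph{not} realised in $\mathbb{S}_n$, then the extra copy of $\mathbb{S}_n$ contributes nothing to $\tau$, so $\mathbb{A}$ and $\mathbb{B}$ realise it exactly the same number of times. Either way the Hanf condition holds, giving $\mathbb{A}\equiv_m\mathbb{B}$ and completing the argument.

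The main obstacle, and the reason the centring trick is needed, is precisely this boundary bookkeeping: truncating $\mathbb{S}_n$ to $n$-balls can create ``cut-off'' local types near the boundary that never occur in $\mathbb{S}_n$, and could corrupt genuine types of $\mathbb{S}_n$ lying near the truncation. Centring a ball at each vertex $b$ recovers every genuine type as a deep, distance-zero point whose neighbourhood is untouched by the boundary, while the threshold absorbs the bounded surplus contributed by the single copy of $\mathbb{S}_n$; the cut-off types cause no difficulty, since they occur identically in $\mathbb{A}$ and in the $\mathbb{A}$-part of $\mathbb{B}$. I would take some care over the relationship between $n$, $r$ and $t$ (it is enough that $n\geq 2r(m)$ and $N\geq t$), but beyond this the constants are not delicate.
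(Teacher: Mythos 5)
Your proof is correct, and it reaches the conclusion by a genuinely different route from the paper's. The paper builds essentially the same pair of structures (its $\mathbb{H}_k$ is your $\mathbb{A}$ with $N=k$, and its $\mathbb{G}_k=\mathbb{H}_k\sqcup\mathbb{S}_n$ is your $\mathbb{B}$) but then plays the $k$-round Ehrenfeucht-Fra\"{\i}ss\'e game by hand, equipping Duplicator with an explicit strategy maintained through four inductive conditions on corresponding elements, exact small distances, preserved large distances (with the halving $\operatorname{large}_{i+1}=\operatorname{large}_i/2$, whence the choice $n>2^{k+1}$), and distance to ball boundaries. You instead black-box the game behind the Fagin--Stockmeyer--Vardi threshold version of Hanf's theorem, reducing everything to a census of pointed $r$-ball types; your key observation, that the centre $b$ of a copy of $\mathbb{B}_n(b)$ realises exactly the $r$-type of $b$ in $\mathbb{S}_n$ (because distances from the centre of a ball are preserved, as the paper notes just before Observation~\ref{obs:distance}; indeed $n\geq r$ already suffices here, so your margin $n\geq 2r$ is harmless), together with the dichotomy on whether a type is realised in $\mathbb{S}_n$, is the census-side analogue of the paper's boundary-tracking Condition~(4). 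You also handle the one delicate quantifier ordering correctly: the Hanf radius $r$ depends only on the quantifier rank $m$ and not on the degree, so you may choose $r$, then $n$ and $\mathbb{S}_n$, and only then take the threshold $t$ for the degree bound of $\mathbb{S}_n$ (which bounds the degree of every ball, hence of $\mathbb{A}$ and $\mathbb{B}$, via the Gaifman graph $\overline{\mathbb{A}}$). What each approach buys: yours is shorter and modular, at the cost of importing the bounded-degree locality machinery and its degree bookkeeping; the paper's is self-contained, citing only standard game background, and in effect reproves the needed instance of Hanf locality inline.
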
 
\begin{proof}
We use a standard Ehrenfeucht-Fra\"{\i}ss\'e game argument: see Libkin~\cite{lib}.  For each $k$, let $n$ be any integer greater than $2^{k+1}$  and let $\mathbb{H}_k$ consist of the disjoint union of  $k$ copies of every $n$-ball $\mathbb{B}_n(a)$, for every $a\in S_n$.  Let $\mathbb{G}_k$ denote the disjoint union of $\mathbb{H}_k$ with $\mathbb{S}_n$.  The second condition on $\mathscr{K}$ trivially shows that $\mathbb{H}_k\in \mathscr{K}$.  Because $\mathscr{K}$ is $\mathsf{S}$-closed, the complement class to $\mathscr{K}$ is closed under extensions and contains $\mathbb{S}_n$, by the second condition on $\mathscr{K}$. Thus $\mathbb{G}_k\notin \mathscr{K}$.  We make frequent reference to the boundaries of $n$-ball components, and to the $\mathbb{S}_n$ component, which we define to have no boundary.

We show how Duplicator has a winning strategy against Spoiler in a $k$-round 
Ehrenfeucht-Fra\"{\i}ss\'e game on the pair $\mathbb{G}_k$, $\mathbb{H}_k$. 
After $i\in\{0, 1, \dots, k\}$ rounds of the game, the players have selected points $g_1,\dots,g_i$ from $\mathbb{G}_k$ and $h_1,\dots,h_i$ from~$\mathbb{H}_k$, and Duplicator has not lost if the induced substructures on these points are isomorphic.  
At each round $i\in\{0, 1, \dots, k\}$, we will say that the distance between two elements $x$ and $x'$ in $\mathbb{G}_k$ or $\mathbb{H}_k$ is \emph{$\lrge_i$} if $d(x,x')\geq 2^{k-i+1}$.  The basic idea is that whenever two points $x$, $x'$ are distance at least $\lrge_i$, then a selection of any third point will be at least $\lrge_{i+1}$ from one of $x$ and $x'$ (this follows because $\lrge_i/2 =2^{k-i+1}/2=2^{k-(i+1)+1}=\lrge_{i+1}$) and that at the end of the game (when $i=k$) the value of $\lrge_k$ is greater than $1$.  For similar arguments, see Libkin~\cite[Chapter~$3$]{lib}.

It is convenient to fix some isomorphisms between any two copies of an $n$-ball, and also between each copy of each $n$-ball $\mathbb{B}_n(a)$ component in either of $\mathbb{G}_k$ or $\mathbb{H}_k$ and the actual substructure $\mathbb{B}_n(a)$ of the $\mathbb{S}_n$ component. Our strategy makes reference to these isomorphisms.  When Duplicator makes a move in response to Spoiler, she will first decide which component to play in---as determined by distances between elements---and once this is chosen, select the appropriate corresponding element---as determined by the fixed isomorphism.  Throughout the proof, we refer to ``corresponding element'' rather than make explicit reference to the fixed isomorphisms.
We will show, inductively, that Duplicator can not only maintain partial isomorphism but also preserve the following conditions at each of the rounds $i\in\{0, \dots, k\}$. For any $0<\ell,j<i$:
\begin{enumerate}[\quad \rm(1)]
\item the element $h_\ell$ in $\mathbb{H}_k$ is a corresponding element of $g_\ell$ in $\mathbb{G}_k$\textup;
\item if $d_{\mathbb{G}_k}(g_\ell, g_j)<2^{k-i+1}$, then $d_{\mathbb{H}_k}(h_\ell, h_j)=d_{\mathbb{G}_k}(g_\ell, g_j)$\textup;
\item if $d_{\mathbb{G}_k}(g_\ell, g_j)\geq 2^{k-i+1}$, then $d_{\mathbb{H}_k}(h_\ell, h_j)\ge2^{k-i+1}$\textup;
\item for $\ell<2^{k-i+1}$, the point $g_j$ is of distance $\ell$ from a boundary if and only if $h_j$ is of distance $\ell$ from a boundary.
\end{enumerate}

The base case holds vacuously. For the induction step, suppose that Duplicator has maintained isomorphism and the four conditions to the completion of round $i$.  
We assume by default that Spoiler is making his $({i+1})^{\rm st}$ move in $\mathbb{G}_k$, but note at key points how a similar argument would cover the case where his move is made in~$\mathbb{H}_k$.  

If Spoiler's selection for $g_{i+1}$ is equal to some previously played element $g_\ell$, where $\ell\leq i$, then Duplicator's response should be $h_\ell$.  Now assume that Spoiler selects an element not previously played.

Case $1$: Any previously played element is distance greater than or equal to $\lrge_{i+1}$ from $g_{i+1}$. \\
Case $1$(a): Spoiler chose $g_{i+1}$ from the $\mathbb{S}_n$ component of $\mathbb{G}_k$.  In this case, Duplicator selects a copy of the ball $\mathbb{B}_n(g_{i+1})$ that has no previously played points in it: after round $i$ there are at least $k-i$ unplayed copies remaining in $\mathbb{H}_k$.  To maintain the hypotheses, Duplicator can select $h_{i+1}$ to be the element corresponding to~$g_{i+1}$.  Case 1(a) does not occur if Spoiler is selecting in $\mathbb{H}_k$.

Case $1$(b): Spoiler chose $g_{i+1}$ in one of the $n$-ball components, a copy of $\mathbb{B}_{n}(a)$, where $a$ is some element in $\mathbb{S}_{n}$.  
Again, Duplicator finds an unused copy of $\mathbb{B}_n(a)$ in $\mathbb{H}_k$ and selects $h_{i+1}$ as the element corresponding to $g_{i+1}$.  All comparative distances are $\lrge_{i+1}$ for both $h_{i+1}$ and  $g_{i+1}$, so the hypotheses are maintained.  A symmetric argument applies when Spoiler is selecting in $\mathbb{H}_k$.

Case $2$. There exists some previously played element $g_{\ell}$ ($\ell<i$) that is distance $d<\lrge_{i+1}$ from $g_{i+1}$.  Let $\mathbb{B}$ denote the $n$-ball component of $\mathbb{H}_k$ containing~$h_\ell$: it is isomorphic to some specific $n$-ball $\mathbb{B}_n(a)$ of $\mathbb{S}_n$, for some $a$.  

Case 2(a).  Spoiler chose $g_{i+1}$ from an $n$-ball component of $\mathbb{G}_k$.  Then $g_\ell$ lies in this same $n$-ball component, and by Condition (1) on $g_\ell$ and $h_\ell$, this component is isomorphic to $\mathbb{B}$ and the choice of $h_{i+1}$ to correspond to $g_{i+1}$ is guaranteed.  Moreover, all comparative distances are identically equal or at least $\lrge_{i+1}$ for $h_{i+1}$ as for $g_{i+1}$, so the hypotheses are maintained.  A technicality here is if $\mathbb{B}$ contains some element $h_j$ for which $g_j$ lies in the $\mathbb{S}_n$ component so that $d_{\mathbb{G}_k}(g_{i+1},g_j)=\infty\geq\lrge_{i+1}$.  But then $d_{\mathbb{G}_k}(g_{\ell},g_j)=\infty$ also, so that $d(h_\ell,h_j)\geq \lrge_i$, and then the property $d(h_{i+1},h_\ell)< \lrge_{i+1}$ implies $d(h_{i+1},h_j)\geq \lrge_{i+1}$ by the triangle inequality.
In the dual to Case 2(a), a symmetric argument applies when Spoiler has selected $h_{i+1}$ near some $h_\ell$ for which $g_\ell$ lies in an $n$-ball component of $\mathbb{G}_k$.

Case 2(b).  Spoiler chooses $g_{i+1}$ from the $\mathbb{S}_n$ component.  We will show that an element corresponding to $g_{i+1}$ exists in $\mathbb{B}$, and that if Duplicator selects it as $h_{i+1}$, then the hypotheses are maintained.  

We first show that $g_{i+1}$ is contained in ${B}_n(a)$, the ball within $\mathbb{S}_n$ isomorphic to the component containing $h_\ell$.  Now, there is no boundary in the~$\mathbb{S}_n$ component, so Condition (4) implies that $h_\ell$ is at least $\lrge_\ell$ from the boundary of $\mathbb{B}$.  Then the distance $\epsilon$ from $h_\ell$ to the centre of $\mathbb{B}$ (the element corresponding to $a$) is at most $n-\lrge_\ell$.  Now the distance from $g_\ell$ to the point $a$ in $\mathbb{S}_k$ is exactly $\epsilon$ also, as $h_\ell$ corresponds to $g_\ell$ under the fixed isomorphism from $\mathbb{B}$ to $\mathbb{B}_n(a)$.  Hence the distance from $g_{i+1}$ to $a$ is at most $n-\lrge_\ell+\lrge_{i+1}\leq n-\lrge_{i+1}<n$, so that $g_{i+1}$ lies within the $n$-ball $\mathbb{B}_n(a)$ and a corresponding element $h_{i+1}$ from $\mathbb{B}$ can be selected.  Moreover $h_{i+1}$ lies at least $\lrge_{i+1}$ from the boundary, so that both Conditions (1) and (4) hold.   
Observation~\ref{obs:distance} now shows that $d_{\mathbb{H}_k}(h_{i+1}, h_\ell)=d_{\mathbb{G}_k}(g_{i+1},g_\ell)$ as well.  
In the dual case where Spoiler is choosing $h_{i+1}$ near $h_\ell$ in $\mathbb{H}_k$ and $g_\ell$ lies in the $\mathbb{S}_n$ component, then the choice of $g_{i+1}$ by Duplicator is immediate: use the fixed isomorphism from the component $\mathbb{B}$ to $\mathbb{B}_n(a)$, with all issues relating to distances now identical to the case just detailed.

It now remains to verify that Conditions $(2)$ and~$(3)$ are maintained for $g_{i+1}$ in comparison to any other element $g_j$ with $j\neq \ell$ and $j\leq i$.

Case $2$(b)(i): If $g_j$ is distance strictly less than $\lrge_{i}$ from $g_{\ell}$, then Condition ($2$) of the hypothesis tells us that $d_{\mathbb{H}_k}(h_\ell, h_j)=d_{\mathbb{G}_k}(g_\ell, g_j)$.  By the triangle inequality, the distance from $g_{i+1}$ to $g_j$ in $\mathbb{G}_k$ is at most $d_{\mathbb{G}_k}(g_{i+1}, g_\ell)+d_{\mathbb{G}_k}(g_\ell, g_j)\leq \lrge_{i+1}+\lrge_i= 2n-(n-\lrge_{i+1})-(n-\lrge_i)$ because $n>\lrge_0$ and $j\neq \ell$ implies $i\geq 1$.  Then Observation \ref{obs:distance} shows that $d_{\mathbb{H}_k}(h_{i+1}, h_j)=d_{\mathbb{G}_k}(g_{i+1}, g_j)$ and both Condition (2) and (3) are maintained in this case for $g_j$.
%

Case $2$(b)(ii): If $g_j$ is distance greater than or equal to $\lrge_{i}$ from $g_\ell$, then condition ($1$) of the hypothesis tells us that the distance of $h_j$ from $h_\ell$ in $\mathbb{H}_k$ is also at least $\lrge_i$.  Recall that  $d_{\mathbb{H}_k}(h_\ell, h_{i+1})=d_{\mathbb{G}_k}(g_\ell, g_{i+1})\leq\lrge_{i+1}$.  Then the triangle inequality and the property $\lrge_{i+1}=\lrge_i/2$ imply that both $d_{\mathbb{G}_k}(g_{i+1},g_j)$ and $d_{\mathbb{H}_k}(h_{i+1},h_j)$ are at least $\lrge_{i+1}$, showing that Conditions (2) and (3) are again maintained.

Finally we note that these conditions imply that the map $g_j\mapsto h_j$ is an isomorphism from the induced substructure on $\{g_1,\dots,g_{i+1}\}$ to $\{h_1,\dots,h_{i+1}\}$.  Conditions (2) and (3) show that this function is a bijection.  Assume that $(g_{i_1},\dots,g_{i_k})\in r$ is some hyperedge in the induced substructure on $\{g_1,\dots,g_{i+1}\}$.  Then all distances between elements of $g_{i_1},\dots,g_{i_k}$ are at most $1$.  Hence, by Condition (2), the same is true for $h_{i_1},\dots,h_{i_k}$.  Hence all of $h_{i_1},\dots,h_{i_k}$ lie in the same $n$-ball component $\mathbb{B}$.  Also, each $h_{i_j}$ is a corresponding element to $g_{i_j}$,  under the one fixed isomorphism from $\mathbb{B}$.  Because this is an isomorphism, the tuple $(h_{i_1},\dots,h_{i_k})$ lies in $r$ within $\mathbb{H}_k$, as required.
\end{proof}
\begin{cor}\label{cor:nfa}
Let $\mathscr{H}$ be an $\mathsf{SP}_{\rm fin}$-closed class of loop-free $k$-hypergraph structures of bounded chromatic number.  If $k=2$ and $\mathscr{H}$ contains a graph that is not a disjoint union of complete bipartite graphs, or if $k>2$ and at least one member of $\mathscr{H}$ has a hyperedge, then $\mathscr{H}$ has no finite axiomatisation in first order logic amongst finite structures.
\end{cor}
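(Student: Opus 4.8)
The plan is to obtain Corollary~\ref{cor:nfa} as a direct application of Theorem~\ref{thm:finitelevel} with $\mathscr{K}:=\mathscr{H}$. Since $\mathscr{H}$ is $\mathsf{SP}_{\rm fin}$-closed it is in particular $\mathsf{S}$-closed, so it is enough to produce, for each $n$, a finite loop-free $k$-hypergraph structure $\mathbb{S}_n$ satisfying the two bullet points of Theorem~\ref{thm:finitelevel}: namely $\mathbb{S}_n\notin\mathscr{H}$, while every finite disjoint union of copies of $n$-balls of $\mathbb{S}_n$ lies in $\mathscr{H}$. Writing $c$ for the (finite) bound on the chromatic number of the members of $\mathscr{H}$, I would take $\mathbb{S}_n$ to be a $k$-uniform hypergraph furnished by Theorem~\ref{thm:erdhaj} that is not $c$-colourable and has no cycle of length less than some $\ell(n)$ to be fixed below. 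Since every member of $\mathscr{H}$ is $c$-colourable, the first bullet point is immediate: $\mathbb{S}_n$ is not $c$-colourable, hence $\mathbb{S}_n\notin\mathscr{H}$.

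For the second bullet point I first record that $\mathbb{E}_k\in\mathscr{H}$. When $k>2$, some member of $\mathscr{H}$ carries a hyperedge, and the induced substructure $\mathbb{E}$ on a hyperedge of least cardinality lies in $\mathsf{S}(\mathscr{H})\subseteq\mathscr{H}$; Lemma~\ref{lem:NAE} then places $\mathbb{E}_k$ in $\mathsf{SP}_{\rm fin}(\mathbb{E})\subseteq\mathscr{H}$. When $k=2$, a graph that is not a disjoint union of complete bipartite graphs must contain an edge, so $\mathbb{E}_2=\mathbb{K}_2\in\mathsf{S}(\mathscr{H})\subseteq\mathscr{H}$. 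With $\mathbb{E}_k\in\mathscr{H}$ secured, Lemma~\ref{lem:hyperforest} (for $k\geq 3$, together with the classical fact that finite forests lie in $\mathsf{SP}_{\rm fin}(\mathbb{K}_2)$ when $k=2$) shows that every finite $k$-uniform hyperforest lies in $\mathsf{SP}_{\rm fin}(\mathbb{E}_k)\subseteq\mathscr{H}$.

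It then remains to choose $\ell(n)$ so that every $n$-ball $\mathbb{B}_n(a)$ of $\mathbb{S}_n$ is a $k$-uniform hyperforest; granting this, any finite disjoint union of copies of such balls is again a finite $k$-uniform hyperforest---disjoint unions create no new cycles---and hence lies in $\mathscr{H}$ by the preceding paragraph, and both hypotheses of Theorem~\ref{thm:finitelevel} are met. This last step is the main obstacle, being the only genuinely geometric part of the argument. The mechanism I would use is that a hypergraph cycle $v_0,e_0,\dots,v_{m-1},e_{m-1}$ lying inside $\mathbb{B}_n(a)$ is a cycle of $\mathbb{S}_n$ whose successive vertices $v_i,v_{i+1}$ share a hyperedge and are therefore adjacent in $\overline{\mathbb{S}_n}$; thus the $v_i$ trace a closed walk of length $m$ in $\overline{\mathbb{S}_n}$, and when the girth is large this walk has no short chords, so diametrically opposite vertices of it sit at distance of order $m$ from one another. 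Since all $v_i$ lie within distance $n$ of $a$, the triangle inequality bounds $m$ by a linear function of $n$, and choosing $\ell(n)$ to exceed this bound (a value of order $n$ suffices) rules out every such cycle and forces $\mathbb{B}_n(a)$ to be acyclic. Each $\mathbb{B}_n(a)$ is an induced substructure of the $k$-uniform $\mathbb{S}_n$ and so is itself $k$-uniform, completing the verification. The care needed at this point is precisely the standard but constant-sensitive fact that large girth makes balls tree-like, transported from the graph setting to the hypergraph notion of cycle used throughout.
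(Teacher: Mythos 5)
Your overall architecture is exactly the paper's: apply Theorem~\ref{thm:finitelevel} to $\mathscr{H}$ with $\mathbb{S}_n$ an Erd\H{o}s--Hajnal hypergraph (Theorem~\ref{thm:erdhaj}) of chromatic number exceeding the bound on $\mathscr{H}$ and of girth large relative to $n$, so that $n$-balls---and hence finite disjoint unions of copies of them---are $k$-uniform hyperforests, which are then placed in $\mathscr{H}$ via Lemmas~\ref{lem:NAE} and~\ref{lem:hyperforest}. The $k>2$ branch is correct and matches the paper, as does the geometric step that large girth forces $n$-balls to be acyclic (the paper asserts this with the bound $2n$ and no proof; your more careful order-$n$ analysis is harmless, since the girth can be taken as large as one likes).

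The $k=2$ branch, however, contains a genuine error: the claimed ``classical fact'' that finite forests lie in $\mathsf{SP}_{\rm fin}(\mathbb{K}_2)$ is false. In the direct power $\mathbb{K}_2^I$ every vertex has exactly one neighbour (its coordinatewise complement), so $\mathsf{SP}(\mathbb{K}_2)$ consists precisely of the graphs embeddable into perfect matchings, that is, disjoint unions of single edges and isolated vertices. Already the two-edge path $a$--$b$--$c$ fails condition (SEP2): every homomorphism into $\mathbb{K}_2$ is a proper $2$-colouring, which forces $a$ and $c$ to receive the same colour. This is exactly why Lemma~\ref{lem:hyperforest} is stated only for $k\geq 3$: a single hyperedge generates all hyperforests when $k\geq 3$, but not when $k=2$. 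Moreover, your weakening of the $k=2$ hypothesis to ``$\mathscr{H}$ contains an edge'' cannot be repaired locally: the class of all finite disjoint unions of complete bipartite graphs is $\mathsf{SP}_{\rm fin}$-closed, has chromatic number bounded by $2$, contains $\mathbb{K}_2$, and \emph{is} finitely axiomatisable (the positive half of Theorem~\ref{thm:main}), so any argument using only the presence of an edge would prove a false statement. The paper closes this case differently: for $k=2$ it invokes Caicedo~\cite[Lemma~2]{cai}, which uses the full hypothesis---that some member of $\mathscr{H}$ is not a disjoint union of complete bipartite graphs---to place all finite forests in the universal Horn class generated by $\mathscr{H}$. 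With that citation substituted for your false lemma, the rest of your proof goes through as written.
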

\begin{proof} 
Assume that $k=2$ and $\mathscr{H}$ contains a graph that is not a disjoint union of complete bipartite graphs, or $k>2$ and at least one member of $\mathscr{H}$ has a hyperedge.
Note that $\mathscr{H}$ coincides with the finite members of the universal Horn class $\mathsf{SPP}_{\rm u}(\mathscr{H})$.
Theorem \ref{thm:erdhaj} shows that there is a $k$-uniform hypergraph $\mathbb{V}_n$ not in $\mathsf{SPP}_{\rm u}(\mathscr{H})$ but whose cycles have length greater than $2n$.  Then an $n$-ball in $\mathbb{V}_{n}$ is a hyperforest.  A disjoint union of hyperforests is still a hyperforest, and hyperforests lie in $\mathsf{SPP}_{\rm u}(\mathscr{H})$: in the case of $k=2$, this is shown by Caicedo~\cite[Lemma~2]{cai}, while the $k
\geq 2$ case follows from Lemmas \ref{lem:hyperforest} and \ref{lem:NAE} above.  Then Theorem~\ref{thm:finitelevel} implies that $\mathscr{H}$ is not definable amongst finite structures by any first order sentence. 
\end{proof}

\section{Hardness}\label{sec:hard}
A well known result of Hell and Ne\v{s}et\v{r}il \cite{helnes} states that for a finite simple graph~$\mathbb{G}$, if~$\mathbb{G}$ is bipartite then $\mathbb{G}$-colourability of finite graphs can be decided in polynomial time, but otherwise is \texttt{NP}-complete.  The same dichotomy was recently established by the authors for universal Horn classes generated by finite simple graphs (with the same boundary of tractability).  In this section we show how to use this to provide an alternative path to Corollary \ref{cor:nfa} in the case of a universal Horn class generated by a finite loop-free hypergraph.  The basic idea is that if a class of structures can be defined in first order logic amongst finite structures, then it cannot be \texttt{NP}-complete with respect to first order reductions---this follows from the known strict containment in $\texttt{AC}^0\subsetneq \texttt{L}\subseteq \texttt{NP}$; see Immerman~\cite{imm}.

\subsection{Background concepts}  We begin with some basic concepts relating to the algebraic method in constraint satisfaction problem complexity.  We give only the bare necessities for the arguments we need; see \cite{bar,JKN,lar} for further background information on these concepts and their relationship to constraint satisfaction problems.

For any relational structure $\mathbb{A}$, we let $\CSP(\mathbb{A})$ denote the computational problem of deciding if an input finite structure admits a homomorphism into $\mathbb{A}$ (the \emph{constraint satisfaction problem} over $\mathbb{A}$, or the \emph{$\mathbb{A}$-colourability problem}), while $\QMEM(\mathbb{A})$ is the computational problem of deciding if an input finite structure lies in the quasivariety of~$\mathbb{A}$: this is almost identical to the problem of deciding membership in the universal Horn class of $\mathbb{A}$, as the universal Horn class and quasivariety differ by at most the one-element total structure, which has no impact on computational complexity, nor on the possible definability of the classes in first order logic.

A \emph{polymorphism} is a homomorphism $f\colon\mathbb{A}^n\to\mathbb{A}$, where $\mathbb{A}^n$ is the $n^{\rm th}$ direct power of~$\mathbb{A}$.  The polymorphism $f$ is said to be \emph{cyclic} if it satisfies  the equation $f(x_0,x_1,\dots,x_{n-1})=f(x_1,\dots,x_{n_1},x_0)$ for all $x_0,\dots,x_{n-1}\in A$, and idempotent if it satisfies $f(x,\dots,x)=x$ for all $x\in A$.  It is known that if $r$ is a relation definable on $\mathbb{A}$ by a $\exists \wedge$ formula (a conjunction of atomic formul{\ae}, with some variables existentially quantified), and $\langle A;r\rangle$ has no cyclic polymorphism, then~$\mathbb{A}$ has no cyclic polymorphism.  
\subsection{Hardness and nonfinite axiomatisability}
A fundamental contribution of Bulatov, Jeavons and Krokhin~\cite{BJK} was to show that if a finite relational structure $\mathbb{A}$ has no proper retracts and fails a particular special condition on its polymorphisms, then $\CSP(\mathbb{A})$ is \texttt{NP}-complete.  Using the results of Barto and Kozik \cite[Theorem~4.1]{barkoz1} and then Chen and Larose \cite[Lemma~6.4]{chelar}, the special condition can be stated as: there exists a cyclic polymorphism.  For our purposes we will use the following equivalent condition, also from \cite[Theorem~4.1]{barkoz1}:
for all primes $p>|A|$ there is a cyclic polymorphism of $\mathbb{A}$ arity $p$. 
The authors' All or Nothing Theorem \cite[Theorem~5.2]{hamjac} shows that the result of \cite{BJK} can be transfered to the membership problem for the quasivariety (and universal Horn class) of $\mathbb{A}$: if $\mathbb{A}$ has no cyclic polymorphism, then  $\QMEM(\mathbb{A})$ is  \texttt{NP}-complete with respect to first order reductions.  The main result of this section is a corollary of this.

\begin{thm}\label{thm:hard}
Let $k\geq c$ and $\mathbb{H}=\langle H;r\rangle$ be a finite loop-free $k$-hypergraph structure with maximal hyperedge cardinality $c$.  
\begin{itemize}
\item \up(Hell and Ne\v{s}et\v{r}il \cite{helnes}, Ham and Jackson \cite{hamjac}.\up) If $k=c=2$ or has no hyperedges at all, then $\CSP(\mathbb{H})$ and $\QMEM(\mathbb{H})$ are tractable if and only if $\mathbb{H}$ is bipartite.
\item Otherwise \up(that is, $k>2$ and there is a hyperedge of some cardinality $c\neq 0$\up), then $\CSP(\mathbb{H})$ and $\QMEM(\mathbb{H})$ are \texttt{NP}-complete with respect to first order reductions and neither can be defined by a first order sentence at the finite level.
\end{itemize}
\end{thm}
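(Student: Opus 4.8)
The plan is to reduce the entire second bullet to one algebraic statement: \emph{$\mathbb{H}$ has no cyclic polymorphism}. Granting this, all three conclusions follow formally. The All or Nothing Theorem \cite[Theorem~5.2]{hamjac} gives at once that $\QMEM(\mathbb{H})$ is \texttt{NP}-complete with respect to first order reductions. For $\CSP(\mathbb{H})$ I would pass to the core $\mathbb{H}^{\ast}$: existence of a cyclic polymorphism is invariant under homomorphic equivalence (composing a cyclic polymorphism with a retraction on all coordinates yields one on the core, and composing with an inclusion gives the converse), so $\mathbb{H}^{\ast}$ also has none, whence $\CSP(\mathbb{H})=\CSP(\mathbb{H}^{\ast})$ is \texttt{NP}-complete by \cite{BJK} in the cyclic-term form of \cite[Theorem~4.1]{barkoz1}, the relevant gadget reductions being first order. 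Finally, a class that is \texttt{NP}-hard under first order reductions cannot be defined by a first order sentence at the finite level, since first order definable classes lie in $\texttt{AC}^0$ and definability would then force $\texttt{NP}\subseteq\texttt{AC}^0$, contradicting $\texttt{AC}^0\subsetneq\texttt{L}\subseteq\texttt{NP}$ (Immerman \cite{imm}). The first bullet needs no new work: the edgeless case is trivial, and the case $k=c=2$ is Hell--Ne\v{s}et\v{r}il \cite{helnes} together with \cite{hamjac}.

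So the whole content is the claim that $\mathbb{H}$ has no cyclic polymorphism, and this is where I expect the real work to be. Fix a hyperedge $e$ of $\mathbb{H}$ of \emph{minimal} cardinality $c'$; loop-freeness gives $c'\geq 2$, and $c'\leq c\leq k$ with $k\geq 3$. I would split on whether $c'<k$ or $c'=k$. When $c'<k$ I argue by a cyclic-shift gadget, using the criterion (contrapositive of \cite[Theorem~4.1]{barkoz1}) that it suffices to exhibit a single prime $p>|H|$ carrying no cyclic polymorphism of arity~$p$. By Dirichlet choose such a $p$ with $p\equiv 1\pmod{c'}$. Identifying $e$ with $\{0,\dots,c'-1\}$, the congruence lets the near-periodic word $(0\,1\cdots (c'-1))$ close up around $\mathbb{Z}_p$ with no large seam, producing a cyclic word $w\in e^{\,p}$ in which every symbol recurs with all gaps at most $c'+1\leq k$; hence every window of $k$ consecutive positions of $w$ contains all of $e$. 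Taking the $k\times p$ array whose $i$-th row is $w$ shifted by $i-1$, each of the $p$ columns has entry set exactly $e$ and so lies in $r$ by set equivalence, while cyclicity forces any arity-$p$ cyclic polymorphism to take a common value on all $k$ rows. The resulting output tuple is constant, contradicting loop-freeness of $\mathbb{H}$.

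When $c'=k$ every hyperedge has size $k$, so $\mathbb{H}$ is $k$-uniform and every tuple of $r$ has distinct entries; here the shift gadget cannot work (a length-$k$ window is a full permutation only when $k\mid p$), so I instead descend to a pp-definable binary relation. Put $D(x,y):=\exists z_3\cdots\exists z_k\,r(x,y,z_3,\dots,z_k)$. Set equivalence makes $D$ symmetric, and the rainbow property of $r$ makes $D$ loop-free; restricted to $e$ it is the complete graph $\mathbb{K}_k$, so $\langle H;D\rangle$ contains a triangle, is non-bipartite, and therefore has no cyclic polymorphism. As $D$ is $\exists\wedge$-definable from $r$, the transfer principle recorded above (a $\exists\wedge$-definable reduct without a cyclic polymorphism forces the same on the whole structure) yields that $\mathbb{H}$ has none. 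I expect the main obstacle to be exactly this case analysis: engineering the seam-free cyclic word so that every $k$-window is surjective onto $e$ when $c'<k$, and verifying loop-freeness and non-bipartiteness of the pp-defined graph when $c'=k$.
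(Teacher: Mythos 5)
Your proposal is correct, and its outer skeleton is exactly the paper's: everything in the second bullet is reduced to the single claim that $\mathbb{H}$ has no cyclic polymorphism, after which $\QMEM(\mathbb{H})$ is handled by the All or Nothing Theorem \cite[Theorem~5.2]{hamjac}, $\CSP(\mathbb{H})$ by \cite{BJK} in the cyclic form of \cite[Theorem~4.1]{barkoz1} (your explicit passage to the core and the observation that cyclic polymorphisms transfer across homomorphic equivalence is slightly more careful than the paper, which leaves this to the cited background), and first order undefinability by $\texttt{AC}^0\subsetneq\texttt{L}\subseteq\texttt{NP}$. Where you genuinely diverge is the case split for the key algebraic claim. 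The paper splits on the minimal hyperedge cardinality $d$: for $d>2$ it pp-defines a binary relation by the padded formula $\exists x_3\dots\exists x_d\,(x_1,x_2,\dots,x_{d-1},x_d,\dots,x_d)\in r$, uses minimality of $d$ to see that this is a disjoint union of $d$-cliques (hence non-bipartite), and invokes \cite{BKN} plus the $\exists\wedge$-transfer principle; the cyclic-shift word gadget appears only for $d=2$, where a binary cyclic word with no run of $k$ equal symbols exists trivially for every prime $p>|H|$. You instead split on $c'<k$ versus $c'=k$: your generalized word gadget (a $c'$-ary cyclic word with all occurrence gaps at most $c'+1\leq k$, obtained via Dirichlet by choosing $p\equiv 1\pmod{c'}$) covers the entire range $2\leq c'<k$ in one stroke, and you fall back on the pp-defined graph plus \cite{BKN} only in the $k$-uniform case $c'=k$ --- which is precisely the paper's padded formula specialised to $d=k$. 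So the two proofs use the same two gadgets but draw the boundary differently: the paper's padding trick makes the intermediate range $2<d<k$ essentially free (no Dirichlet, no seam analysis), while your construction keeps the combinatorics self-contained and invokes \cite{BKN} only where the shift gadget provably cannot work, as you correctly note. Two pedantic points, neither fatal: you should take $p>\max(|H|,k)$ so that $k$-windows do not wrap (Dirichlet permits this, and wrapped windows would in fact still have entry set $e$); and minimality of $c'$ is used in your argument only to make the two cases exhaustive and to get loop-freeness of $D$ from uniformity when $c'=k$, whereas the paper needs minimality to force distinctness of $x_1,\dots,x_d$ in its padded formula.
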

\begin{proof}
The first statement is trivial when there are no hyperedges at all.  When $k=c=2$, then the $\CSP(\mathbb{H})$ case is directly from \cite{helnes} and the $\QMEM(\mathbb{H})$ case is directly from \cite{hamjac}.  Now, assume that $k>2$ and $\mathbb{H}$ has a hyperedge $e$.  

Consider a hyperedge  $e$ of minimal cardinality $d\leq c$.  First assume that $d>2$ and consider the binary relation $\sim$ defined from $r$ by the formula
\[
\exists x_3\dots \exists x_d\ (x_1,x_2,x_3,\dots,x_{d-1},\stackrel{k-d+1}{\overbrace{x_d,\dots,x_d}})\in r
\]
in free variables $x_1,x_2$.
The formula $(x_1,x_2,x_3\dots,x_{d-1},\stackrel{k-d+1}{\overbrace{x_d,\dots,x_d}})\in r$ interprets in all hyperedges of cardinality $d$ and no others, so that $\sim$ is the graph consisting of $d$-cliques on each hyperedge of cardinality $d$.  As $d>2$ this graph is not bipartite, hence has no cyclic polymorphism by Barto, Kozik and Niven~\cite{BKN}.  Thus $\mathbb{H}$ has no cyclic polymorphism, as required.

Now assume that $d=2$, and let the elements in the hyperedge $e$ be denoted $0,1$.  We show that for any prime $p>|H|$, there is no cyclic polymorphism of arity $p$.  Assume for contradiction that such a $p$-ary polymorphism exists.  Let $s_0,\dots,s_{p-1}$ be a sequence in $\{0,1\}^p$ with the property that cyclically there is no run of $k$ consecutive $0$s, nor $k$ consecutive $1$s.  Such sequences are very easily seen to exist, given that $k>2$.    Let $a\in H$ be the value of $f(s_0,\dots,s_{p-1})$.  Because $f$ is cyclic we have the following equalities:
\[\begin{matrix}
f(s_0,& s_1, &s_2, &\dots,&s_{p-1})&=&a\\
f(s_1,&s_2,&s_3,&\dots,&s_0)&=&a\\
\vdots&\vdots&\vdots&\dots\phantom{,}&\vdots&&\vdots&\\
f(s_{k-1},&s_{k},&s_{k+1},&\dots,&s_{k-2})&=&a
\end{matrix}\]
Because there is no run of $k$ consecutive values in $s_0,\dots,s_{p-1}$ (treated cyclically), the tuples $(s_i,\dots,s_{i+k-1})$ forming columns on the left of the equalities lie in the fundamental relation $r$ on $\mathbb{H}$.
Hence as $f$ is a polymorphism, the constant tuple $(a,\dots,a)$ is in $r$.  But this contradictions the assumption that $\mathbb{H}$ was loop-free.  So no cyclic polymorphism of arity $p$ exists, as required.
\end{proof}
\begin{remark}
The All or Nothing Theorem of \cite{hamjac} actually shows a stronger result than what is stated in Theorem \ref{thm:hard}.  Whenever Theorem \ref{thm:hard} states \texttt{NP}-completeness of $\QMEM(\mathbb{H})$ the following holds: any class of finite hypergraphs $\mathscr{K}$ has \texttt{NP}-hard membership problem provided its members admit homomorphisms into $\mathbb{H}$ and that~$\mathsf{SP}_{\rm fin}(\mathbb{H})\subseteq\mathscr{K}$.
\end{remark}

\noindent {\bf Acknowledgement.}
The results above were originally developed in the context of $k$-uniform hypergraphs.  The authors thank Micha\l\  Stronkowski for observing that non-uniform hypergraphs of bounded hyperedge cardinality could also be considered as relational structures.

\end{document}